\newcommand{\C}{\mathbb{C}}
\newcommand{\U}{\mathcal{U}}
\newcommand{\n}[1]{\left\vert {#1} \right\vert}                    
\newcommand{\N}[1]{\left\Vert {#1} \right\Vert}                    
\newcommand{\inner}[2]{\left\langle {#1} , {#2} \right\rangle}     
\newcommand{\Vol}[1]{{\rm Vol} \left( {#1} \right)}               
\newcommand{\Volb}[1]{{\rm Vol}_{\alpha} \left( {#1} \right)}               
\newcommand{\cent}{t}                
\newcommand{\BN}[1]{\left\Vert {#1} \right\Vert_{\alpha}}                      
\newcommand{\Binner}[2]{\left\langle {#1} , {#2} \right\rangle_{\alpha}} 
\newcommand{\BKer}[2]{K^{(\alpha)}_{\mathcal{U}} ( {#1} , {#2}) }
\newcommand{\BKalp}[1]{K^{(\alpha)}_{{#1}} }
\begin{document}

\newtheorem{Def}{Definition}[section]
\newtheorem{Thm}[Def]{Theorem}
\newtheorem{Lem}[Def]{Lemma}
\newtheorem{IntroThm}{Theorem}
\newtheorem{IntroCor}[IntroThm]{Corollary}
\newtheorem{Prop}[Def]{Proposition}
\newtheorem{Cor}[Def]{Corollary}
\newtheorem*{Cor*}{Corollary}
\renewcommand{\theIntroThm}{\Alph{IntroThm}}

 \numberwithin{equation}{section}

\title[Essential norm estimates for Toeplitz operators]
{Essential norm estimates for positive Toeplitz operators on the weighted Bergman space of a minimal bounded homogeneous domain}

\keywords{Toeplitz operator, essentiaml norm, Bergman space, bounded homogeneous domain, minimal domain.}
\subjclass[2010]{Primary 47B35; Secondary 32A25}

\author[S. Yamaji]{Satoshi Yamaji}
\address{%
Satoshi Yamaji \endgraf
Graduate School of Mathematics \endgraf
Nagoya University \endgraf
Chikusa-ku, Nagoya, 464-8602 \endgraf
Japan
}

\email{satoshi.yamaji@math.nagoya-u.ac.jp}

\maketitle

\begin{abstract}
We give estimates for the essential norms of a positive Toeplitz operator on the Bergman space of a minimal bounded homogeneous domain in terms of the Berezin transform or the averaging function of the symbol. 
Using these estimates, we also give necessary and sufficient conditions for the positive Toeplitz operators to be compact. 
\end{abstract}

\section{Introduction}
The essential norm $\N{T}_e$ of a bounded operator $T$ is defined by 
\begin{align*}
\N{T}_e := \inf \{ \N{T-K} ; K {\rm \ is \ compact} \} .
\end{align*}
It is easy to see that $T$ is compact if and only if $\N{T}_e=0$. 
Essential norm estimates for Toeplitz operators with symbol in $L^{\infty}$ are considered in \cite{Zheng}, \cite{Choe}. 
In the present paper, we give an estimate of the essential norms of bounded positive Toeplitz operators on the weighted Bergman space of a minimal bounded homogeneous domain in terms of the Berezin transform or the averaging function. 

In 1988, Zhu \cite{Zhu1} obtained conditions in order that a positive Toeplitz operator is bounded or compact on the weighted Bergman space of a bounded symmetric domain in its Harish-Chandra realization. 
He characterized the conditions by using the Carleson type measures, the averaging function, and the Berezin transform. 
In \cite{Yamaji}, we consider the same problem for the Bergman space of a minimal bounded homogeneous domain. 
On the other hand, \v{C}u\v{c}kovi\'{c} and Zhao \cite{CZ} gave estimates for the essential norms of weighted composition operators by using the Berezin transform. 
We apply their methods for the case of essential norm estimates for positive Toeplitz operators 
on the weighted Bergman space of a minimal bounded homogeneous domain. 
Essential norm estimate tells us necessary and sufficient conditions for the positive Toeplitz operators to be compact. 
We obtain \cite[Theorem A]{Zhu1} and \cite[Theorem 1.2]{Yamaji} from the main theorem of this paper. 

Let $\U \subset \C^d$ be a minimal bounded homogeneous domain with a center $\cent \in \U$ 
(for the definition of the mininal domain, see \cite{I-Y}, \cite{MM}). 
For example, the unit ball, bounded symmetric domain in its Harish-Chandra realization, and 
representative domain of bounded homogeneous domains are 
minimal bounded homogeneous domains with a center $0$. 
Moreover, the products of these domains are also minaimal domains. 
We fix a minimal bounded homogeneous domain $\mathcal{U}$ with a center $\cent$. 
Let $dV(z)$ be the Lebesgue measure, $\mathcal{O}(\U)$ the space of all holomorphic functions on $\U$, 
and $L^p_a(\U)$ the Bergman space $L^p(\U,dV) \cap \mathcal{O}(\U)$ of $\U$. 
We denote by $K_{\U}$ the Bergman kernel of $\U$, that is, the reproducing kernel of $L^2_a(\U,dV)$. 
For $\alpha \in \mathbb{R}$, let $dV_{\alpha}$ denote the measure on $\U$ given by $dV_{\alpha}(z) := K_{\mathcal{U}}(z,z)^{-\alpha} dV(z)$.
It is known that there exists a constant $\varepsilon_{{\rm min}}$ such that the weighted Bergman space 
$L^2_a(\U,dV_{\alpha}) := L^2(\mathcal{U},dV_{\alpha}) \cap \mathcal{O}(\mathcal{U})$ 
is non-trivial if and only if $\alpha > \varepsilon_{{\rm min}}$ (for explicit expression of $\varepsilon_{{\rm min}}$, see \cite[(5.1)]{YamajiComp}). 
For $\alpha > \varepsilon_{{\rm min}}$, we consider the weighted Bergman space 
$ L^p_a(\mathcal{U},dV_{\alpha}) := L^p(\mathcal{U},dV_{\alpha}) \cap \mathcal{O}(\mathcal{U})$.

Let $\mu$ be a Borel measure on $\mathcal{U}$. 
For  $f \in L^2_a(\mathcal{U},dV_{\alpha})$, the Toeplitz operator $T_{\mu}$ with symbol $\mu$ is defined by 
\begin{align*}
T_{\mu} f(z) := \int_{\mathcal{U}} K_{\U}^{(\alpha)}(z,w) f(w) \, d\mu(w) \ \ \ (z \in \mathcal{U}) .
\end{align*}
If $d\mu(w) = u(w) dV_{\alpha}(w)$ holds for some $u \in L^{\infty}(\mathcal{U})$, we have $T_{\mu} f =P(uf)$, 
where $P$ is the orthogonal projection from $L^2(\mathcal{U},dV_{\alpha})$ onto $L^2_a(\mathcal{U},dV_{\alpha})$. 
A Toeplitz operator is called positive if its symbol is positive. 
Throughout this paper, we assume that $\mu$ is a positive Borel measure on $\mathcal{U}$.

Since the Bergman kernel of a minimal bounded homogeneous domain satisfies a useful estimate 
(see \cite[Theorem A]{I-Y}), the boundedness of $T_{\mu}$ on $L^2_a(\mathcal{U},dV_{\alpha})$ is characterized by using the Carleson type measures, the averaging function, and the Berezin transform (the definitions of them, see section \ref{sect200}).
We obtain the following theorem from the boundedness of the positive Bergman projection and Zhu's method (see \cite{Zhu1} or \cite{Zhu3}).  
\begin{IntroThm} \label{Bddness}
The following conditions are all equivalent.\\
$(a)$ \ $T_{\mu}$ is a bounded operator on $L^2_a(\mathcal{U},dV_{\alpha})$.\\
$(b)$ \ The Berezin transform $\widetilde{\mu}(z) $ is a bounded function on $\mathcal{U}$.\\
$(c)$ \ For all $p > 0$, $\mu$ is a Carleson measure for $L^p_a(\mathcal{U},dV_{\alpha})$. \\
$(d)$ \ The averaging function $\widehat{\mu}(z) $ is bounded on $\mathcal{U}$.
\end{IntroThm}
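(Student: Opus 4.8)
The plan is to establish the cyclic chain $(a)\Rightarrow(b)\Rightarrow(d)\Rightarrow(c)\Rightarrow(a)$, following Zhu's method and using throughout the Bergman kernel estimate of \cite[Theorem A]{I-Y} together with the homogeneity of $\U$. The central objects are the normalized reproducing kernels $k_z(w):=\BKer{w}{z}/\BKer{z}{z}^{1/2}$, which satisfy $\BN{k_z}=1$ by the reproducing property, and a lattice $\{a_j\}$ whose Bergman metric balls $D(a_j,r)$ cover $\U$ with bounded overlap. The kernel estimate supplies two facts I will use repeatedly: the quantity $\n{k_z(w)}^2$ is comparable to $\Volb{D(z,r)}^{-1}$ uniformly for $w$ in a fixed Bergman ball about $z$, and $\BKer{z}{z}^{-1}\asymp\Volb{D(z,r)}$; homogeneity guarantees that all implied constants are independent of $z$.

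First I would dispatch the two easy implications. For $(a)\Rightarrow(b)$, I compute $\widetilde\mu(z)=\Binner{T_\mu k_z}{k_z}$, so that $\n{\widetilde\mu(z)}\le\N{T_\mu}\,\BN{k_z}^2=\N{T_\mu}$, giving boundedness of $\widetilde\mu$. For $(b)\Rightarrow(d)$, I restrict the defining integral of the Berezin transform to a Bergman ball and use the lower bound for $\n{k_z(w)}^2$ there:
\begin{align*}
\widetilde\mu(z)\;\ge\;\int_{D(z,r)}\n{k_z(w)}^2\,d\mu(w)\;\ge\;c\,\frac{\mu(D(z,r))}{\Volb{D(z,r)}}\;=\;c\,\widehat\mu(z),
\end{align*}
so boundedness of $\widetilde\mu$ forces boundedness of $\widehat\mu$.

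For $(c)\Rightarrow(a)$ I would invoke only the case $p=2$. Since $\mu$ is positive, the reproducing property and Fubini give the sesquilinear identity $\Binner{T_\mu f}{g}=\int_\U f\,\overline{g}\,d\mu$ for $f,g\in L^2_a(\U,dV_\alpha)$. Cauchy--Schwarz and the $p=2$ Carleson embedding then yield
\begin{align*}
\n{\Binner{T_\mu f}{g}}\;\le\;\Bigl(\int_\U\n{f}^2\,d\mu\Bigr)^{1/2}\Bigl(\int_\U\n{g}^2\,d\mu\Bigr)^{1/2}\;\le\;C\,\BN{f}\,\BN{g},
\end{align*}
whence $T_\mu$ is bounded.

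The hard part is $(d)\Rightarrow(c)$, where the main work lies. Given $f\in L^p_a(\U,dV_\alpha)$, the sub-mean value inequality for the subharmonic function $\n{f}^p$ yields a pointwise bound $\n{f(w)}^p\le C\,\Volb{D(w,r)}^{-1}\int_{D(w,2r)}\n{f}^p\,dV_\alpha$, with $C$ uniform in $w$ by homogeneity. Summing over the lattice,
\begin{align*}
\int_\U \n{f}^p\,d\mu\;\le\;\sum_j \mu(D(a_j,r))\sup_{D(a_j,r)}\n{f}^p\;\le\;C\,\N{\widehat\mu}_\infty\sum_j\int_{D(a_j,2r)}\n{f}^p\,dV_\alpha,
\end{align*}
and the bounded-overlap property of the enlarged balls $D(a_j,2r)$ controls the last sum by $C'\N{\widehat\mu}_\infty\int_\U\n{f}^p\,dV_\alpha$. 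The obstacle is twofold: first, securing the covering lemma with uniform overlap and the uniform sub-mean value constant, both of which rest on the homogeneity of $\U$ together with the comparability of Bergman balls and their $V_\alpha$-volumes furnished by the kernel estimate; second, verifying that the argument runs for \emph{every} $p>0$, so that the volume comparison and the sub-mean value constant are controlled independently of $p$ and the single hypothesis $(d)$ delivers the Carleson property simultaneously for all $p$.
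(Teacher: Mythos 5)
Your outline is correct in substance, but it follows a genuinely different route from the paper's. The paper never proves the chain $(b)\Leftrightarrow(c)\Leftrightarrow(d)$ at all: it cites \cite[Theorem 3.3]{YamajiComp} for those equivalences, and only supplies $(a)\Rightarrow(b)$ (the same normalized-kernel computation $\widetilde{\mu}(z)=\Binner{T_\mu k_z^{(\alpha)}}{k_z^{(\alpha)}}\leq\N{T_\mu}$ that you give) and $(c)\Rightarrow(a)$. Its $(c)\Rightarrow(a)$ runs through the positive Bergman operator $P_{\U}^+$: the Carleson property replaces $d\mu$ by $M\,dV_\alpha$ inside $\int_{\U}\n{K^{(\alpha)}_{\U}(z,w)}\n{f(w)}\,d\mu(w)$, and boundedness of $P_{\U}^+$ (Lemma \ref{PosBergBddness}, obtained by transfer to a Siegel domain and the theorem of B\'ekoll\'e--Kagou \cite{B-K}) closes the estimate; this simultaneously shows $T_\mu f\in L^2_a(\U,dV_\alpha)$ and justifies the Fubini step behind Proposition \ref{IntForm}, which the paper reuses in its essential-norm section. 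You instead re-prove the cited equivalences: $(b)\Rightarrow(d)$ by the kernel lower bound on Bergman balls (exactly the inequality $\widehat{\mu}\leq C\widetilde{\mu}$ that the paper quotes as \cite[(3.4)]{YamajiComp}), and $(d)\Rightarrow(c)$ by Zhu's lattice/sub-mean-value argument, whose ingredients (bounded-overlap covering, uniform sub-mean-value inequality, comparability of $V_\alpha$-volumes of nearby Bergman balls) are indeed available for minimal bounded homogeneous domains from the kernel estimates of \cite{I-Y}. Your $(c)\Rightarrow(a)$ needs only the $p=2$ embedding plus Cauchy--Schwarz, avoiding $P_{\U}^+$, the Siegel realization and \cite{B-K} altogether. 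The trade-off: your version is more self-contained and elementary; the paper's is shorter because it can cite, and its $P^+$-route yields the identity (\ref{TmuKeisan}) in the pointwise form needed later for the upper essential-norm estimate.

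One wrinkle you should repair in $(c)\Rightarrow(a)$: as stated, the identity $\Binner{T_\mu f}{g}=\int_{\U}f\overline{g}\,d\mu$ presupposes that $T_\mu f$ already lies in $L^2_a(\U,dV_\alpha)$ --- which is the very thing being proved --- and the appeal to Fubini needs an absolute-convergence bound that you have not established at that point (the paper manufactures it from $P_{\U}^+$, see (\ref{Fubini0})). The repair is standard and stays inside your toolkit: the $p=2$ Carleson property makes $B(f,g):=\int_{\U}f\overline{g}\,d\mu$ a bounded sesquilinear form on $L^2_a\times L^2_a$, so there is a bounded operator $S$ with $\Binner{Sf}{g}=B(f,g)$; testing with $g=K^{(\alpha)}_{\U}(\cdot,z)$ then gives $Sf(z)=\int_{\U}K^{(\alpha)}_{\U}(z,w)f(w)\,d\mu(w)=T_\mu f(z)$, the integral converging absolutely because both $f$ and $K^{(\alpha)}_{\U}(\cdot,z)$ lie in $L^2(d\mu)$ by the Carleson hypothesis. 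With that rearrangement your cycle is sound.
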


Theorem \ref{Bddness} is same as \cite[Theorem A]{Zhu1} if $\mathcal{U}$ is 
a Harish-Chandra realization of bounded symmetric domain 
and as \cite[Theorem 1.2]{Yamaji} if $\alpha =0$, that is, the case of the non-weighted Bergman space. 

We estimate the essential norm of the bounded positive Toeplitz operators on $L^2_a(\mathcal{U},dV_{\alpha})$. 
\begin{IntroThm} \label{EssEst}
Assume that $T_{\mu}$ is a bounded operator on $L^2_a(\mathcal{U},dV_{\alpha})$. Then, one has
\begin{align*}
 \N{T_{\mu}}_e \sim
 \limsup_{z \rightarrow \partial \U}  \widetilde{\mu}(z)  \sim
     \limsup_{z \rightarrow \partial \U} \widehat{\mu}(z) ,
\end{align*}
where the notation $\sim$ means that the ratios of the two terms are bounded below and above by constants. 
\end{IntroThm}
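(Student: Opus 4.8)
The plan is to prove the three quantities are comparable by establishing a chain of inequalities. Since $\N{T_\mu}_e$, $\limsup_{z\to\partial\U}\widetilde{\mu}(z)$, and $\limsup_{z\to\partial\U}\widehat{\mu}(z)$ appear, and the averaging function $\widehat\mu$ and Berezin transform $\widetilde\mu$ are already known to be comparable as bounded functions (this is built into the equivalence of (b) and (d) in Theorem~\ref{Bddness}, and should persist at the boundary via the Bergman-kernel estimate of \cite[Theorem~A]{I-Y}), the equivalence $\limsup\widetilde\mu \sim \limsup\widehat\mu$ should follow essentially from pointwise estimates relating the two. So the real content is the comparison of $\N{T_\mu}_e$ with $\limsup_{z\to\partial\U}\widetilde\mu(z)$, and I would organize the proof around the two inequalities $\N{T_\mu}_e \lesssim \limsup\widetilde\mu$ and $\N{T_\mu}_e \gtrsim \limsup\widetilde\mu$.

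For the lower bound $\N{T_\mu}_e \gtrsim \limsup_{z\to\partial\U}\widetilde\mu(z)$, the standard device is to use the normalized reproducing kernels. For $z\in\U$, set $k_z(w) := \BKer{w}{z}/\sqrt{\BKer{z}{z}}$, which has unit norm in $L^2_a(\U,dV_\alpha)$ and, crucially, tends weakly to zero as $z\to\partial\U$. For any compact operator $K$ one has $\N{Kk_z}\to 0$, so $\N{T_\mu-K} \ge \limsup_{z\to\partial\U}\N{(T_\mu-K)k_z} \ge \limsup_{z\to\partial\U}\N{T_\mu k_z} \ge \limsup_{z\to\partial\U}\inner{T_\mu k_z}{k_z}$. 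The final inner product is, by the definition of $T_\mu$ and the reproducing property, exactly the Berezin transform $\widetilde\mu(z)$. Taking the infimum over compact $K$ yields $\N{T_\mu}_e \ge \limsup_{z\to\partial\U}\widetilde\mu(z)$ up to a constant. The weak-null property of $k_z$ is the technical point here, and it should follow from the Bergman-kernel estimates for $\U$ together with the non-triviality condition $\alpha>\varepsilon_{\min}$.

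For the upper bound $\N{T_\mu}_e \lesssim \limsup_{z\to\partial\U}\widehat\mu(z)$, I would follow the \v{C}u\v{c}kovi\'c--Zhao strategy cited in the introduction. The idea is to decompose $\U$ into a compact core $\U_r$ (where $z$ stays a fixed hyperbolic distance from the boundary) and a boundary collar, split the measure as $\mu = \mu|_{\U_r} + \mu|_{\U\setminus\U_r}$, and observe that $T_{\mu|_{\U_r}}$ is compact. Hence $\N{T_\mu}_e \le \N{T_{\mu|_{\U\setminus\U_r}}}$, and one estimates this operator norm by a Schur-type test using the Bergman-kernel estimate, controlling the kernel of the integral operator by the averaging function $\widehat\mu$ restricted to the collar. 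Letting the core exhaust $\U$, the operator norm is controlled by $\sup_{z\in\U\setminus\U_r}\widehat\mu(z)$, whose limit as $r$ grows is $\limsup_{z\to\partial\U}\widehat\mu(z)$.

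The main obstacle I expect is the upper-bound estimate via the Schur test: one must choose the correct testing function (typically a power $\BKer{z}{z}^{s}$ of the weighted kernel) and verify the two Schur integrals converge with the right dependence on $\alpha$ and on the geometry of $\U$. This is where the precise homogeneous-domain Bergman-kernel estimate of \cite[Theorem~A]{I-Y} does the heavy lifting, and where the constraint $\alpha>\varepsilon_{\min}$ enters to guarantee integrability. The comparison between $\widehat\mu$ and $\widetilde\mu$ at the boundary (needed to close the chain of $\sim$'s) should be routine once the pointwise estimates from \cite{Yamaji} are invoked, but care is required to transfer a global comparison to a boundary-$\limsup$ comparison.
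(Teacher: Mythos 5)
Your proposal is correct, and while your lower bound coincides exactly with the paper's (Theorem \ref{LowerEst}: weak nullity of $k_z^{(\alpha)}$ from Proposition \ref{NormalBK}, then $\widetilde{\mu}(z)=\Binner{T_{\mu}k_z^{(\alpha)}}{k_z^{(\alpha)}}\leq\BN{T_{\mu}k_z^{(\alpha)}}$ and infimum over compact $K$), your upper bound takes a genuinely different route. You split the \emph{measure}, $\mu=\mu|_{B(t,r)}+\mu_r$, discard the compactly supported part as giving a compact Toeplitz operator, and estimate $\N{T_{\mu_r}}$. The paper (following \v{C}u\v{c}kovi\'{c}--Zhao) never splits $\mu$: it truncates the \emph{space}, taking $Q_n$ the projection onto the span of the first $n$ vectors of an orthonormal basis and $R_n=I-Q_n$, so that $\N{T_{\mu}}_e\leq\N{T_{\mu}R_n}$; then Proposition \ref{IntForm} and Cauchy--Schwarz give $\BN{T_{\mu}R_nf}^2\leq\N{R_nf}_{L^2(d\mu)}\N{T_{\mu}R_nf}_{L^2(d\mu)}$, and each $L^2(d\mu)$-integral is split over $B(t,r)$ (killed as $n\to\infty$ by $R_n\to0$ strongly plus dominated convergence) and over $\U_r$ (controlled by $C\sup_{z\in\U_{r-\rho}}\widehat{\mu}(z)$ via Lemma \ref{Mu}). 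Your route is the classical Zhu-style argument and is shorter once compactness of the core operator is known; the paper's route buys the fact that the only compact operators it ever needs are the finite-rank $Q_n$, so that compactness of Toeplitz operators is an output of the theorem (Corollary \ref{MainCptness}) rather than an input.

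Three points in your sketch deserve care, though none is fatal. First, compactness of $T_{\mu|_{B(t,r)}}$, which you merely assert, does need a proof; it follows from the paper's tools: by Proposition \ref{IntForm}, $T_{\mu|_{B(t,r)}}=J^{\ast}J$ where $J$ is the embedding of $L^2_a(\U,dV_{\alpha})$ into $L^2(d\mu|_{B(t,r)})$, and $J$ is compact because weakly null sequences converge uniformly on the compact set $B(t,r)$ (Lemma \ref{WeakUnif}) and $\mu(B(t,r))<\infty$ when $\widehat{\mu}$ is bounded. Second, the tail estimate is most naturally done not by a Schur test on the kernel but by the Carleson embedding: $\N{T_{\mu_r}}\leq\sup\big\{\n{\textstyle\int f\bar{g}\,d\mu_r}\big\}\leq C\sup_{z}\widehat{\mu_r}(z)$ using \cite[Lemma 3.1]{YamajiComp}; a genuine Schur test would in any case need that same lemma to convert $d\mu_r$-integrals into $\widehat{\mu_r}\,dV_{\alpha}$-integrals. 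Note also that $\widehat{\mu_r}$ is \emph{not} the restriction of $\widehat{\mu}$ to the collar; passing from $\sup_z\widehat{\mu_r}(z)$ to $\sup_{z\in\U_{r-\rho}}\widehat{\mu}(z)$ is precisely Lemma \ref{Mu} (a triangle-inequality argument on Bergman balls), and the collar widening from $r$ to $r-\rho$ is harmless as $r\to\infty$. Third, only the inequality $\widehat{\mu}\leq C\widetilde{\mu}$ holds pointwise (\cite[(3.4)]{YamajiComp}); the reverse comparison of the two limsups is not pointwise and is obtained, exactly as your organization permits, by closing the chain $\limsup\widetilde{\mu}\leq\N{T_{\mu}}_e\leq C\limsup\widehat{\mu}\leq C'\limsup\widetilde{\mu}$.
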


Since $T_{\mu}$ is compact if and only $\N{T_{\mu}}_e=0$, Theorem \ref{EssEst} yields the following corollary. 
\begin{IntroCor} \label{Intro3}
Let $\mu$ be a finite positive Borel measure on $\mathcal{U}$. 
Then the following conditions are all equivalent.\\
$(a)$ \ $T_{\mu}$ is a compact operator on $L^2_a(\mathcal{U},dV_{\alpha})$.\\
$(b)$ \ $\widetilde{\mu}(z)$ tends to $ 0$ as $z \rightarrow \partial \mathcal{U}$.\\ 
$(c)$ \ For all $p > 0$, $\mu$ is a vanishing Carleson measure for $L^p_a(\mathcal{U},dV_{\alpha})$. \\
$(d)$ \ $\widehat{\mu}(z)$ tends to $0$ as $z \rightarrow \partial \mathcal{U}$. 
\end{IntroCor}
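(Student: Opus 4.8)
The plan is to deduce this corollary directly from Theorem \ref{EssEst} together with the elementary fact that $T_{\mu}$ is compact if and only if $\N{T_{\mu}}_e=0$. The equivalences $(a)\Leftrightarrow(b)\Leftrightarrow(d)$ should then be almost immediate, and the genuine additional work will lie in incorporating the vanishing Carleson condition $(c)$. Since Theorem \ref{EssEst} requires $T_{\mu}$ to be bounded, I would first check that each vanishing hypothesis forces boundedness: if $\widehat{\mu}(z)\to 0$ as $z\to\partial\U$, then $\widehat{\mu}$ is a nonnegative function on $\U$ that stays bounded near the boundary and on compact subsets, hence is bounded on all of $\U$, and by Theorem \ref{Bddness} this gives boundedness of $T_{\mu}$. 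The same reasoning applies to $\widetilde{\mu}$, so conditions $(b)$ and $(d)$ each make Theorem \ref{EssEst} available.

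With boundedness secured, the core argument runs as follows. By definition $T_{\mu}$ is compact exactly when $\N{T_{\mu}}_e=0$, and Theorem \ref{EssEst} gives
\begin{align*}
\N{T_{\mu}}_e \sim \limsup_{z\to\partial\U}\widetilde{\mu}(z) \sim \limsup_{z\to\partial\U}\widehat{\mu}(z).
\end{align*}
Hence $\N{T_{\mu}}_e=0$ if and only if $\limsup_{z\to\partial\U}\widetilde{\mu}(z)=0$, and likewise for $\widehat{\mu}$. Since both $\widetilde{\mu}$ and $\widehat{\mu}$ are nonnegative (the measure $\mu$ being positive), vanishing of the $\limsup$ at the boundary is the same as the function tending to $0$ there. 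This yields $(a)\Leftrightarrow(b)\Leftrightarrow(d)$ at once.

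It remains to bring in $(c)$, and this is where I expect the main obstacle. The strategy is to mirror the Carleson-measure part of the proof of Theorem \ref{Bddness}, but now tracking behaviour near $\partial\U$ instead of a uniform bound. For $(d)\Rightarrow(c)$ I would fix $p>0$ and estimate the embedding $L^p_a(\U,dV_{\alpha})\hookrightarrow L^p(\U,d\mu)$ restricted to functions concentrated near the boundary, using the Bergman-kernel estimate of \cite[Theorem A]{I-Y} and a covering of $\U$ by Bergman-metric balls; the hypothesis $\widehat{\mu}(z)\to 0$ forces the local Carleson constants over boundary balls to tend to $0$, which is precisely the vanishing Carleson property. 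For the converse $(c)\Rightarrow(d)$ I would test the vanishing Carleson condition against the normalized reproducing kernels, whose $L^p$ mass concentrates near a chosen boundary point, to recover $\widehat{\mu}(z)\to 0$. The finiteness of $\mu$ enters here to keep these test integrals finite and to guarantee that the normalized kernels tend weakly to zero.

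The delicate point throughout will be uniformity: I will need the kernel estimates from \cite{I-Y} and the bounded-overlap property of the Bergman-metric covering to hold with constants independent of the boundary point, so that the passage from a pointwise boundary condition on $\widehat{\mu}$ to an honest vanishing Carleson estimate (and back) is quantitatively controlled. Once $(c)\Leftrightarrow(d)$ is established in this way, it joins the chain $(a)\Leftrightarrow(b)\Leftrightarrow(d)$ and the proof is complete.
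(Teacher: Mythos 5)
Your treatment of $(a)\Leftrightarrow(b)\Leftrightarrow(d)$ is essentially the paper's own argument: secure boundedness of $T_{\mu}$ first, then read off compactness from Theorem \ref{EssEst} together with the fact that $T_{\mu}$ is compact iff $\N{T_{\mu}}_e=0$, using nonnegativity of $\widetilde{\mu}$ and $\widehat{\mu}$ to convert vanishing limsups into honest limits. (The paper gets boundedness by noting that $\widetilde{\mu}$ is continuous when $\mu$ is finite, hence bounded once it vanishes at the boundary, and then invoking Theorem \ref{Bddness}; your ``bounded near the boundary plus bounded on compacts'' argument is the same point, and note that it also uses the finiteness of $\mu$ to control $\widehat{\mu}$, respectively $\widetilde{\mu}$, on compact sets.)

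The divergence is in how you handle $(c)$. The paper does not prove $(b)\Leftrightarrow(c)\Leftrightarrow(d)$ at all: it quotes this equivalence from \cite[Theorem 3.3]{YamajiComp}, the same reference used in the proof of Theorem \ref{Bddness}, so its entire proof of the corollary is a few lines. You instead propose to re-derive $(c)\Leftrightarrow(d)$ from scratch. Your outline is sound in principle: for $(c)\Rightarrow(d)$, testing the vanishing Carleson condition against the family $k_z^{(\alpha)}$ (norm one, converging to zero uniformly on compacts by Proposition \ref{NormalBK}) gives $\widetilde{\mu}(z)\to 0$, and then $\widehat{\mu}\le C\widetilde{\mu}$ (the inequality quoted at the start of Section \ref{sect400}) yields $(d)$; for $(d)\Rightarrow(c)$ one splits $\int_{\U}\n{f_k}^p\,d\mu$ into a compact piece, killed by uniform convergence and finiteness of $\mu$, and a boundary piece, controlled by exactly the kind of covering estimate embodied in Lemma \ref{Mu}. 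But as written this part of your proposal is a plan, not a proof: the uniformity issues you yourself flag are the real content, and carrying them out amounts to reproving \cite[Theorem 3.3]{YamajiComp}. So the architecture is correct, but the portion you identify as ``the genuine additional work'' is precisely what the paper discharges with a citation; either execute that covering argument in full, or cite the known equivalence, in which case your proof collapses to the paper's.
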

Corollary \ref{Intro3} is also a generalization of \cite[Theorem B]{Zhu1} or \cite[Theorem 1.3]{Yamaji}.

Let us explain the organization of this paper. 
Section \ref{sect200} is preliminaries. 
In \ref{sect210}, we review properties of the weighted Bergman spaces 
of a minimal bounded homogeneous domain. 
Proposition \ref{NormalBK} plays an important role in the lower estimate of $\N{T_{\mu}}_e$. 
In \ref{sect220} and \ref{sect230}, we recall the definitions of Berezin symbol, averaging function, 
Carleson measure and vanishing Carleson measure. 
In \ref{sect240}, we prove the boundedness of the positive Bergman operator $P^+_{\U}$ on $L^2(\mathcal{U},dV_{\alpha})$ by using the boundedness of $P^+_{\mathcal{D}}$, where $\mathcal{D}$ is a Siegel domain biholomorphic to $\U$. 
The boundedness of $P^+_{\mathcal{D}}$ is given by B{\'e}koll{\'e} and Kagou \cite{B-K}. 
In section \ref{sect300}, we show necessary and sufficient conditions for positive Toeplitz operators on the weighted Bergman space of a minimal bounded homogeneous domain to be bounded (Theorem \ref{Bddness}). 
The boundedness of $P^+_{\U}$ yields an equality (Proposition \ref{IntForm}). 
This equality means that the inner product of $T_{\mu}f$ and $g$ in $L^2_a(\mathcal{U},dV_{\alpha})$ is equal to the inner product of $f$ and $g$ in $L^2_a(\mathcal{U},d \mu)$. 
In section \ref{sect400}, we estimate the essential norm of $T_{\mu}$. 
We use the sequence $\{ k_z^{(\alpha)} \}_{z \rightarrow \partial \U}$ to obtain the lower estimate of $\N{T_{\mu}}_e$. 
In section \ref{sect420}, we prove the upper estimate of $\N{T_{\mu}}_e$. 
We consider the estimate of the operator norm of $T_{\mu}(I-Q_n)$, where $Q_n$ is a compact operator defined from an orthonormal basis of $L^2_a(\mathcal{U},dV_{\alpha})$. 
By Proposition \ref{IntForm}, we consider the $L^2(d\mu)$-norm of $(I-Q_n)f$ and 
$T_{\mu}(I-Q_n)f$. Then, a property of Berezin transform (Lemma \ref{Mu}) plays an important role. \\

Throughout the paper, $C$ denotes a positive constant whose value may change from one occurrence to the next one.

\section{Preliminaries}  \label{sect200}
\subsection{Weighted Bergman space of a minimal bounded homogeneous domain}  \label{sect210}
For a bounded domian $\U \subset \C^{d}$, it is known that $\U$ is a minimal domain with a center $t \in \U$ 
if and only if 
$$K_{\U}(z,t) = \frac{1}{\Vol{\U}}$$
for any $z \in \U$ (see \cite[Proposition 3.6]{I-K} or \cite[Theorem 3.1]{MM}). 
For any $z \in \mathcal{U}$ and $r>0$, let
$$  B(z,r) := \{ w \in \mathcal{U} \mid d_{\U} (z,w) \leq r \} $$
be the Bergman metric disk with center $z$ and radius $r$, 
where $d_{\U} (\cdot,\cdot)$ denotes the Bergman distance on $\mathcal{U}$. 

For $f \in L^2_a(\mathcal{U},dV_{\alpha})$, we write 
\begin{align}
\N{f}_{\alpha} := \left( \int_{\U} \n{f(z)}^2 \, dV_{\alpha}(z) \right)^{\frac{1}{2}}
\end{align}
and we denote by $\inner{\cdot}{\cdot}_{\alpha}$ the inner product of $L^2_a(\mathcal{U},dV_{\alpha})$. 
We denote by $K_{\mathcal{U}}^{(\alpha)}$ the reproducing kernel of $L^2_a(\U,dV_{\alpha})$. 
It is known that 
\begin{align}
K_{\mathcal{U}}^{(\alpha)}(z,w) = C_{\alpha} K_{\mathcal{U}}(z,w)^{1+\alpha}   \label{weightedB}
\end{align}
for some positive constant $C_{\alpha}$. 
For $z \in \mathcal{U}$, we denote by $k_z^{(\alpha)}$ the normalized reproducing kernel of $L^2_a(\U,dV_{\alpha})$, that is, 
$$   k_z^{(\alpha)} (w) := \frac{K_{\mathcal{U}}^{(\alpha)}(w,z)}{K_{\mathcal{U}}^{(\alpha)}(z,z)^{\frac{1}{2}}} 
    = \sqrt{C_{\alpha}} \left( \frac{K_{\mathcal{U}}(w,z)}{K_{\mathcal{U}}(z,z)^{\frac{1}{2}}} \right)^{1+\alpha} . $$
For any Borel set $E$ in $\U$, we define
$$  \Volb{E} := \int_{E} dV_{\alpha}(w) . $$

First, we prove the following lemma. Although the proof is same as the one for the case of other domains (see \cite{CZ}, \cite{Eng}), we write it here for the sake of completeness. 
\begin{Lem} \label{WeakUnif}
A sequence of functions $\{ f_n \}$ in $L^2_a(\U,dV_{\alpha})$ converges to $0$ weakly in $L^2_a(\U,dV_{\alpha})$ if and only if $\{ f_n \}$ is bounded in $L^2_a(\U,dV_{\alpha})$ and converges to $0$ uniformly on each compact sets of $\U$. 
\end{Lem}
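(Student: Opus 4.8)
The plan is to exploit the reproducing kernel structure of $L^2_a(\U,dV_{\alpha})$, translating weak convergence into statements about the pointwise values $f_n(z) = \Binner{f_n}{\BKer{\cdot}{z}}$, and to treat the two implications separately.

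First I would assume $f_n \to 0$ weakly. Since weakly convergent sequences are bounded (uniform boundedness principle), $\{f_n\}$ is bounded, say $\BN{f_n} \le M$. For each fixed $z \in \U$ the function $\BKer{\cdot}{z}$ lies in $L^2_a(\U,dV_{\alpha})$, so the reproducing property gives $f_n(z) = \Binner{f_n}{\BKer{\cdot}{z}} \to 0$; that is, $f_n \to 0$ pointwise. To upgrade this to uniform convergence on a compact $E \subset \U$, I would use that $z \mapsto \BKer{\cdot}{z}$ is norm-continuous from $\U$ into $L^2_a(\U,dV_{\alpha})$ (since $\BN{\BKer{\cdot}{z}-\BKer{\cdot}{z'}}^2 = \BKer{z}{z} + \BKer{z'}{z'} - 2\,\mathrm{Re}\,\BKer{z}{z'}$ and $K^{(\alpha)}_{\mathcal{U}}$ is continuous), so that $\{\BKer{\cdot}{z} : z \in E\}$ is a norm-compact subset of $L^2_a(\U,dV_{\alpha})$. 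A bounded sequence converging weakly to $0$ converges to $0$ uniformly on norm-compact sets: covering $\{\BKer{\cdot}{z}:z\in E\}$ by finitely many balls of radius $\varepsilon/(2M)$ and invoking the pointwise convergence at the finitely many centers yields $\sup_{z \in E} \n{f_n(z)} = \sup_{z\in E} \n{\Binner{f_n}{\BKer{\cdot}{z}}} \to 0$.

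Conversely I would assume $\{f_n\}$ bounded, $\BN{f_n}\le M$, with $f_n \to 0$ uniformly on compacts (hence pointwise). To prove $\Binner{f_n}{g} \to 0$ for every $g \in L^2_a(\U,dV_{\alpha})$ it suffices, by the uniform bound $\BN{f_n}\le M$ and a standard $\varepsilon/2$ approximation, to verify this on a dense subspace. The finite linear combinations $g = \sum_j c_j\, \BKer{\cdot}{z_j}$ of reproducing kernels are dense (if $g \perp \BKer{\cdot}{z}$ for all $z$ then $g(z)=0$ for all $z$, so $g=0$), and for such $g$ one has $\Binner{f_n}{g} = \sum_j \overline{c_j}\, f_n(z_j) \to 0$. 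Approximating an arbitrary $g$ within $\varepsilon/(2M)$ by such a combination gives $\limsup_n \n{\Binner{f_n}{g}} \le \varepsilon/2$, and letting $\varepsilon \to 0$ finishes the argument.

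The two approximation arguments are routine; the one step deserving care is the passage from pointwise to uniform convergence on compacts in the forward direction. Its content is precisely the norm-compactness of $\{\BKer{\cdot}{z} : z \in E\}$ together with the elementary fact that weak convergence is uniform on norm-compact sets. Alternatively one may argue through normal families, since $\BN{f_n}\le M$ forces $\n{f_n(z)} \le M\,\BKer{z}{z}^{1/2}$, giving local boundedness and, by Montel's theorem, normality, so that every locally uniform subsequential limit vanishes by pointwise convergence.
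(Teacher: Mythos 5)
Your proof is correct, but it takes a genuinely different route from the paper in both directions. For the forward implication the paper argues via normal families: norm boundedness gives local boundedness through $\n{f_n(z)} \leq \BN{f_n}\, \BKer{z}{z}^{1/2}$, Montel's theorem extracts locally uniformly convergent subsequences from any subsequence, and pointwise convergence to $0$ (from testing against kernels) forces every subsequential limit to vanish, so the full sequence converges locally uniformly --- this is precisely the alternative you sketch at the end, whereas your primary argument instead uses norm continuity of $z \mapsto \BKer{\cdot}{z}$, the resulting norm-compactness of $\{\BKer{\cdot}{z} : z \in E\}$ for compact $E$, and the elementary fact that a bounded weakly null sequence tends to $0$ uniformly against a norm-compact set; both are sound, and yours is more functional-analytic while the paper's is more complex-analytic. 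For the converse the paper truncates: given $g$, it chooses a compact $K$ carrying all but $\varepsilon$ of $\int_{\U} \n{g}^2\, dV_{\alpha}$ and splits $\Binner{f_n}{g}$ into the part over $K$ (small by uniform convergence) and the tail over $\U \setminus K$ (small by boundedness of $\BN{f_n}$ --- incidentally, the paper's displayed tail bound is missing the square root that Cauchy--Schwarz would produce, a harmless slip); you instead verify weak convergence on the dense span of reproducing kernels, where it reduces to pointwise convergence, and extend by the uniform bound $\BN{f_n} \leq M$. Your converse uses only the reproducing-kernel structure and in fact shows that pointwise convergence plus boundedness already implies weak convergence (uniform convergence on compacts is not needed there), while the paper's truncation argument is more robust in that it works verbatim in any $L^2$-type space without invoking density of kernel combinations.
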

\begin{proof}
First, we prove the only if part. Suppose $\{ f_n \}$ converges to $0$ weakly in $L^2_a(\U,dV_{\alpha})$. 
Then, it is known that $\{ f_n \}$ is norm bounded in $L^2_a(\U,dV_{\alpha})$ and converges to $0$ pointwise. 
Take any subsequence of $\{ f_n \}$. Then, there exists a subsubsequence of $\{ f_n \}$ that converges to $0$ 
uniformly on each compact sets of $\U$ by Montel's theorem. 
Therefore, $\{ f_n \}$ itself converges to $0$ uniformly on each compact sets of $\U$. 

Next, we prove the if part. Suppose $\{ f_n \}$ is norm bounded and converges to $0$ uniformly 
on each compact sets of $\U$. Take any $\varepsilon >0$. 
For any $g \in L^2_a(\U,dV_{\alpha})$, 
there exists a compact set $K \subset \U$ such that 
\begin{align}
\int_{\U \backslash K} \n{g(z)}^2 \, dV_{\alpha}(z)  < \varepsilon . \label{Weak1}
\end{align}
Hence, we have 
\begin{align}
\n{\Binner{f_n}{g} } 
& \leq \n{\int_{K} f_n(z)\overline{g(z)} \, dV_{\alpha}(z) } + \n{\int_{\U \backslash K} f_n(z)\overline{g(z)} \, dV_{\alpha}(z) } \nonumber \\
& \leq \BN{g} \sup_{z \in K} \n{f_n(z)} + \BN{f_n} \int_{\U \backslash K} \n{g(z)}^2 \, dV_{\alpha}(z)  . \label{Weak2}
\end{align}
Since $\{ f_n \}$ converges to $0$ uniformly on $K$, there exists a $N \in \mathbb{N}$ such that 
the first term of (\ref{Weak2}) is less than or equal to $\BN{g} \varepsilon$ for any $n \geq N$. 
On the other hand, since $\{ f_n \}$ is norm bounded, there exists a $M>0$ such that $\Vert f_n \Vert_{\alpha} \leq M$. 
This together with (\ref{Weak1}), we see that the second term of (\ref{Weak2}) is less than or equal to $M\varepsilon$. 
Hence, we obtain $\langle f_n, g \rangle_{\alpha} \rightarrow 0$ as $n \rightarrow \infty$. 
This means that $\{ f_n \}$ converges to $0$ weakly in $L^2_a(\U,dV_{\alpha})$.
\end{proof}

In \cite{I-Y}, we obtain some estimates for the Bergman kernel of minimal bounded homogeneous domains. 
On the other hand, $K_{\mathcal{U}}^{(\alpha)}$ satisfies (\ref{weightedB}). 
Therefore, we have the following proposition. 
\begin{Prop} \label{NormalBK} 
$(i)$ \ For all compact set $K \subset \U$, there exists a constant $C>0$ such that $C^{-1} \leq \vert K_{\mathcal{U}}^{(\alpha)}(z,w) \vert \leq C$ 
for any $z \in K$ and $w \in \U$. \\
$(ii)$ \  A sequence $\{ k_a^{(\alpha)} \}$ converges to $0$ uniformly on each compact sets of $\U$ as $a \rightarrow \partial \mathcal{U}$. \\
$(iii)$ \ A sequence $\{ k_a^{(\alpha)} \}$ converges to $0$ weakly in $L^2_a(\mathcal{U},dV_{\alpha})$ as $a \rightarrow \partial \mathcal{U}$. 
\end{Prop}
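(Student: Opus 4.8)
The plan is to reduce all three assertions to pointwise estimates for $K_{\U}(z,w)$ by means of the normalization (\ref{weightedB}) and then to feed in the Bergman kernel estimate of \cite[Theorem A]{I-Y}. Since $K_{\U}^{(\alpha)}(z,w)=C_{\alpha}K_{\U}(z,w)^{1+\alpha}$ with $C_{\alpha}>0$ and $1+\alpha>0$ (recall $\alpha>\varepsilon_{{\rm min}}$), raising to the power $1+\alpha$ and multiplying by $C_{\alpha}$ preserves two-sided bounds as well as the property of tending to $0$; thus $(i)$ is the heart of the matter, $(ii)$ will follow by dividing by $K_{\U}(a,a)^{(1+\alpha)/2}$, and $(iii)$ will follow from $(ii)$ through Lemma \ref{WeakUnif}.

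For $(i)$ I would begin from the minimal-domain identity $K_{\U}(z,\cent)=1/\Vol{\U}$, which makes the kernel a fixed nonzero constant whenever one argument is the center. The estimate of \cite[Theorem A]{I-Y} bounds $\n{K_{\U}(z,w)}$ above and below by an explicit quantity that is jointly continuous in $(z,w)$ and zero-free (bounded homogeneous domains are Lu Qi-Keng domains), normalized by the minimal-domain property so as to become a positive constant at $w=\cent$. The guiding example is the ball, where $\n{K(z,w)}=c\,\n{1-\inner{z}{w}}^{-(d+1)}$ and, for $\n{z}\le r<1$, the factor $\n{1-\inner{z}{w}}$ stays between $1-r$ and $1+r$ for every $w$; the same mechanism holds in general. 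Concretely, for $z$ in a compact set $K\subset\U$ the comparison quantity remains bounded away from $0$ and $\infty$ as $w$ ranges over all of $\U$ up to the boundary, so that $0<m\le\n{K_{\U}(z,w)}\le M<\infty$ with $m,M$ depending only on $K$; this gives $(i)$ after the reduction above.

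For $(ii)$, fix a compact $K\subset\U$. By $(i)$, $\n{K_{\U}(w,a)}^{1+\alpha}$ is bounded above uniformly for $w\in K$ and $a\in\U$, whereas $K_{\U}(a,a)\to\infty$ as $a\to\partial\U$ (a standard property of the Bergman kernel of a bounded homogeneous domain, also visible from \cite[Theorem A]{I-Y}). Since
\[
\n{k_a^{(\alpha)}(w)}=\sqrt{C_{\alpha}}\,\frac{\n{K_{\U}(w,a)}^{1+\alpha}}{K_{\U}(a,a)^{(1+\alpha)/2}},
\]
the numerator stays bounded and the denominator tends to $\infty$, so $k_a^{(\alpha)}\to 0$ uniformly on $K$. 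Assertion $(iii)$ is then immediate: each $k_a^{(\alpha)}$ has unit norm, so for any sequence $a_n\to\partial\U$ the family $\{k_{a_n}^{(\alpha)}\}$ is norm bounded and, by $(ii)$, converges to $0$ uniformly on compact sets; Lemma \ref{WeakUnif} yields weak convergence to $0$.

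The main obstacle is the lower bound in $(i)$. The upper bound for $\n{K_{\U}(z,w)}$ with $z\in K$ is comparatively soft, but showing that $\n{K_{\U}(z,w)}$ stays bounded away from $0$ uniformly as $w\to\partial\U$ is precisely where the explicit estimate of \cite[Theorem A]{I-Y}, rather than mere continuity and zero-freeness, is indispensable. A secondary point to secure is the blow-up $K_{\U}(a,a)\to\infty$ used in $(ii)$, which should likewise be read off the same kernel estimate.
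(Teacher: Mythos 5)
Your proposal is correct and takes essentially the same route as the paper: reduction to the unweighted kernel via (\ref{weightedB}), a two-sided bound $M^{-1}\leq\n{K_{\U}(z,w)}\leq M$ for $z$ in a compact set and $w\in\U$ imported from \cite{I-Y} (the paper encloses $K$ in a Bergman ball $B(t,\rho)$ and cites the ready-made \cite[Proposition 6.1]{I-Y}, whereas you sketch extracting the same bound from \cite[Theorem A]{I-Y}), then $(ii)$ from the bounded numerator together with $K_{\U}(a,a)\to\infty$ as $a\to\partial\U$ (for which the paper cites \cite[Proposition 5.2]{Koba}), and finally $(iii)$ from Lemma \ref{WeakUnif}. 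The only substantive difference is one of citation granularity, not of method.
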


\begin{proof}
First, we prove $(i)$. Take a compact set $K \subset \U$. 
Then, there exists a $\rho >0$ such that $K \subset B(t,\rho)$. 
By \cite[Proposition 6.1]{I-Y}, there exists a positive constant $M_{\rho}$ such that 
$M_{\rho}^{-1} \leq \n{K_{\mathcal{U}}(z,w)} \leq M_{\rho}$
 for any $z \in B(t,\rho)$ and $w \in \mathcal{U}$. 
This together with (\ref{weightedB}) follows from $(i)$. 
Next, we prove $(ii)$. Take a compact set $K \subset \U$ arbitrarily. 
From $(i)$, we obtain 
\begin{align*}
  \n{k_a^{(\alpha)}(z)} = \n{\frac{K_{\mathcal{U}}^{(\alpha)}(z,a)}{K_{\mathcal{U}}^{(\alpha)}(a,a)^{\frac{1}{2}}}}
      \leq \frac{C}{ K_{\mathcal{U}}(a,a)^{\frac{1+\alpha}{2}}}  
\end{align*}
for all $z \in K$ and $a \in \U$. 
Since $K_{\mathcal{U}}(a,a) \rightarrow \infty$ as $a \rightarrow \partial \mathcal{U}$ 
(see \cite[Proposition 5.2]{Koba}), we obtain $(ii)$. 
We also obtain $(iii)$ from Lemma \ref{WeakUnif}.
\end{proof}

\subsection{Berezin symbol and averaging function} \label{sect220}
For a Borel measure $\mu$ on $\mathcal{U}$, we define a function $\widetilde{\mu}$ on $\mathcal{U}$ by
\begin{align*}
 \widetilde{\mu}(z) := \int_{\mathcal{U}} \vert k_z^{(\alpha)}(w) \vert^2 \, d\mu(w) ,
\end{align*}
which is called the Berezin symbol of the measure $\mu$. 
Since $\n{K_{\mathcal{U}}(z,w)}$ is a bounded function on $B(\cent,r) \times \mathcal{U}$ 
by Proposition \ref{NormalBK}, 
$\widetilde{\mu}$ is a continuous function if $\mu$ is finite. 
For fixed $\rho>0$, we also define a function $\widehat{\mu}$ on $\mathcal{U}$ by
\begin{align*}
\widehat{\mu}(z) := \frac{\mu(B(z,\rho))}{\Volb{B(z,\rho)}} ,
\end{align*}
which is called the averaging function of the measure $\mu$. 
Although the value of $\widehat{\mu}$ depends on the parameter $\rho$, we will ignore that distinction. 

\subsection{Carleson measure and vanishing Carleson measure} \label{sect230}
Let $\mu$ be a positive Borel measure on $\mathcal{U}$ and $p >0$. 
We say that $\mu$ is a Carleson measure for $L^p_a(\mathcal{U},dV_{\alpha})$ if 
there exists a constant $M>0$ such that 
\begin{align*}
 \int_{\mathcal{U}} \vert f(z) \vert^p \, d\mu (z) \leq M \int_{\mathcal{U}} \vert f(z) \vert^p \, dV_{\alpha} (z) 
\end{align*}
for all $f \in L^p_a(\mathcal{U},dV_{\alpha})$. 
It is easy to see that $\mu$ is a Carleson measure for $L^p_a(\mathcal{U},dV_{\alpha})$ 
if and only if $L^p_a(\mathcal{U},dV_{\alpha}) \subset L^p_a(\mathcal{U}, d\mu)$ and the inclusion map 
$$  i_p : L^p_a(\mathcal{U},dV_{\alpha}) \longrightarrow  L^p_a(\mathcal{U}, d\mu)$$
is bounded. We see that the propery of being a Carleson measure is independent of $p$ (\cite[Theorem 3.2]{YamajiComp}). 

Suppose $\mu$ is a Carleson measure for $L^p_a(\mathcal{U},dV_{\alpha})$. 
We say that $\mu$ is a vanishing Carleson measure for $L^p_a(\mathcal{U},dV_{\alpha})$ if 
\begin{align*}
\lim_{k \rightarrow \infty} \int_{\U} \n{f_k(w)}^p \, d \mu(w) =0 
\end{align*}
whenever $\{ f_k \}$ is a bounded sequence in $L^p_a(\mathcal{U}, dV_{\alpha})$ that converges to $0$ 
uniformly on each compact subset of $\U$. 
We see that the propery of being a Carleson measure is also independent of $p$ (\cite[Theorem 3.3]{YamajiComp}).

\subsection{Boundedness of the positive Bergman operator}  \label{sect240}
In order to prove Theorem \ref{Bddness}, 
we use the boundedness of the positive Bergman operator $P_{\mathcal{U}}^+$ on $L^2(\mathcal{U},dV_{\alpha})$ defined by 
\begin{align}
P_{\mathcal{U}}^+ g(z) := \int_{\mathcal{U}} \n{K_{\U}^{(\alpha)}(z,w)} g(w) \, dV_{\alpha}(w)  \label{positive Bergman operator}
\end{align}
for $g \in L^2(\mathcal{U},dV_{\alpha})$. 
We prove that $P_{\mathcal{U}}^+$ is a bounded operator on $L^2(\mathcal{U},dV_{\alpha})$.

It is known that every bounded homogeneous domain is holomorphically equivalent to a homogeneous Siegel domain (see \cite{VGS}). 
Let $\Phi$ be a biholomorphic map from $\mathcal{U}$ to a Siegel domain $\mathcal{D}$. We define a unitary map $U_{\Phi}$ 
from $L^2(\mathcal{U},dV_{\alpha})$ to $L^2(\mathcal{D}, K_{\mathcal{D}}(\zeta,\zeta)^{-\alpha}dV(\zeta))$ by 
$$ U_{\Phi} f(\zeta)  := f(\Phi^{-1}(\zeta)) \n{\det J(\Phi^{-1},\zeta)}^{1+\alpha}  \ \ \ (f \in L^2(\mathcal{U},dV_{\alpha})). $$
Then, we have
$$    U_{\Phi} \circ P_{\mathcal{U}}^+ = P_{\mathcal{D}}^+ \circ U_{\Phi} . $$
Therefore, the boundedness of $P_{\mathcal{U}}^+$ on $L^2(\mathcal{U},dV_{\alpha})$ 
is equivalent to the boundedness of $P_{\mathcal{D}}^+$ on $L^2(\mathcal{D},K_{\mathcal{D}}(\zeta,\zeta)^{-\alpha}dV(\zeta))$. 
On the other hand, B{\'e}koll{\'e} and Kagou proved the boundedness of $P_{\mathcal{D}}^+$ (\cite[Theorem II.7]{B-K}). 
Therefore, we have the following lemma. 

\begin{Lem} \label{PosBergBddness} 
The operator $P_{\mathcal{U}}^+$ is bounded on $L^2(\mathcal{U},dV_{\alpha})$. 
\end{Lem}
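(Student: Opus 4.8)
The plan is to reduce the boundedness of $P_{\U}^+$ to the corresponding statement on a homogeneous Siegel domain, where it is already available. Since $\Phi$ carries $\U$ biholomorphically onto a homogeneous Siegel domain $\mathcal{D}$, and since B\'ekoll\'e and Kagou proved in \cite[Theorem II.7]{B-K} that $P_{\mathcal{D}}^+$ is bounded on $L^2(\mathcal{D},K_{\mathcal{D}}(\zeta,\zeta)^{-\alpha}dV(\zeta))$, it suffices to transfer this boundedness through $U_{\Phi}$. Thus I would verify the two assertions that the paragraph preceding the lemma uses but leaves unproved: that $U_{\Phi}$ is unitary between the two weighted $L^2$ spaces, and that it intertwines $P_{\U}^+$ with $P_{\mathcal{D}}^+$. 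Both follow from the transformation law for the Bergman kernel under a biholomorphism.

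The starting point is the identity
\begin{align*}
K_{\U}(z,w) = \det J(\Phi,z)\, K_{\mathcal{D}}(\Phi(z),\Phi(w))\, \overline{\det J(\Phi,w)} .
\end{align*}
Putting $z = \Phi^{-1}(\zeta)$, $w = \Phi^{-1}(\eta)$ and using $\det J(\Phi,\Phi^{-1}(\zeta)) = (\det J(\Phi^{-1},\zeta))^{-1}$, this yields
\begin{align*}
\n{K_{\U}(\Phi^{-1}(\zeta),\Phi^{-1}(\eta))}^{1+\alpha}
= \frac{\n{K_{\mathcal{D}}(\zeta,\eta)}^{1+\alpha}}{\n{\det J(\Phi^{-1},\zeta)}^{1+\alpha}\,\n{\det J(\Phi^{-1},\eta)}^{1+\alpha}} ,
\end{align*}
and on the diagonal $K_{\U}(\Phi^{-1}(\eta),\Phi^{-1}(\eta)) = K_{\mathcal{D}}(\eta,\eta)\,\n{\det J(\Phi^{-1},\eta)}^{-2}$. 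To obtain unitarity I would write out the norm of $U_{\Phi}f$ in $L^2(\mathcal{D},K_{\mathcal{D}}(\zeta,\zeta)^{-\alpha}dV(\zeta))$ and change variables by $z = \Phi^{-1}(\zeta)$, so that $dV(\zeta) = \n{\det J(\Phi^{-1},\zeta)}^{-2}dV(z)$. The weight $\n{\det J(\Phi^{-1},\zeta)}^{2(1+\alpha)}$, the diagonal identity applied to $K_{\mathcal{D}}(\zeta,\zeta)^{-\alpha}$, and the Jacobian of the substitution contribute the exponents $2(1+\alpha)$, $-2\alpha$, and $-2$ of $\n{\det J(\Phi^{-1},\zeta)}$, which sum to $0$; hence the norm collapses to $\BN{f}$. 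As $U_{\Phi}$ is invertible with inverse $U_{\Phi^{-1}}$, it is unitary.

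The main step is the intertwining identity. Starting from $U_{\Phi}(P_{\U}^+g)(\zeta) = \n{\det J(\Phi^{-1},\zeta)}^{1+\alpha}\,(P_{\U}^+g)(\Phi^{-1}(\zeta))$, I would expand $P_{\U}^+g$ by \eqref{positive Bergman operator}, use \eqref{weightedB} together with $dV_{\alpha}(w) = K_{\U}(w,w)^{-\alpha}dV(w)$, and substitute $w = \Phi^{-1}(\eta)$. Inserting the two kernel identities, the factor $\n{\det J(\Phi^{-1},\zeta)}^{1+\alpha}$ cancels against the denominator from the off-diagonal kernel, while the exponents of $\n{\det J(\Phi^{-1},\eta)}$ coming from the off-diagonal kernel $(-(1+\alpha))$, from the diagonal weight $(2\alpha)$, and from the real Jacobian of the substitution $(+2)$ add up to $1+\alpha$. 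Since $\n{\det J(\Phi^{-1},\eta)}^{1+\alpha}g(\Phi^{-1}(\eta)) = (U_{\Phi}g)(\eta)$, the integral becomes
\begin{align*}
C_{\alpha}\int_{\mathcal{D}}\n{K_{\mathcal{D}}(\zeta,\eta)}^{1+\alpha}\,(U_{\Phi}g)(\eta)\,K_{\mathcal{D}}(\eta,\eta)^{-\alpha}\,dV(\eta) ,
\end{align*}
which is $P_{\mathcal{D}}^+(U_{\Phi}g)(\zeta)$ once $C_{\alpha}\n{K_{\mathcal{D}}(\zeta,\eta)}^{1+\alpha}$ is identified with $\n{K_{\mathcal{D}}^{(\alpha)}(\zeta,\eta)}$ (the analogue of \eqref{weightedB} on $\mathcal{D}$; a discrepancy in the normalising constant would only be a harmless positive factor). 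Thus $U_{\Phi}\circ P_{\U}^+ = P_{\mathcal{D}}^+\circ U_{\Phi}$, and unitarity of $U_{\Phi}$ gives $\N{P_{\U}^+} = \N{P_{\mathcal{D}}^+} < \infty$ by \cite[Theorem II.7]{B-K}.

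I expect the bookkeeping of the Jacobian exponents in the intertwining step to be the only delicate point: one must apply the Bergman-kernel transformation law both off and on the diagonal, carry the conjugate-holomorphic factor $\overline{\det J(\Phi^{-1},\eta)}$ correctly through the modulus, and confirm that the three contributions to the power of $\n{\det J(\Phi^{-1},\eta)}$ combine to exactly the exponent $1+\alpha$ needed to reassemble $U_{\Phi}g$.
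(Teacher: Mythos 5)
Your proposal is correct and follows essentially the same route as the paper: the paper's proof consists precisely of conjugating $P_{\mathcal{U}}^+$ by the weighted unitary $U_{\Phi}$ to reduce to the boundedness of $P_{\mathcal{D}}^+$ on the Siegel domain, which is \cite[Theorem II.7]{B-K}. The only difference is that you carry out the Jacobian bookkeeping (unitarity of $U_{\Phi}$ and the intertwining identity $U_{\Phi}\circ P_{\mathcal{U}}^+ = P_{\mathcal{D}}^+\circ U_{\Phi}$, up to a harmless positive constant from the normalisation of the weighted kernels) that the paper asserts without verification, and your exponent computations are accurate.
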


\section{Boundedness of the Toeplitz operator}  \label{sect300}
In this section, we prove Theorem \ref{Bddness}. 
First, we prove the following proposition. 
\begin{Prop} \label{IntForm}
For some $p>0$, let $\mu$ be a Carleson measure for $L^p_a(\mathcal{U},dV_{\alpha})$. 
Then, $T_{\mu} f$ is in $L^2_a(\mathcal{U},dV_{\alpha})$ for any $f \in L^2_a(\mathcal{U},dV_{\alpha})$. 
Moreover, we have 
\begin{align}
\inner{T_{\mu} f}{g}_{\alpha} 
   = \int_{\mathcal{U}} f(w) \overline{g(w)} \, d\mu(w) \label{TmuKeisan}
\end{align}
for any $f, g \in L^2_a(\mathcal{U},dV_{\alpha})$.
\end{Prop}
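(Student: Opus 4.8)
The plan is to avoid estimating the iterated integral defining $T_{\mu}$ directly, and instead to produce the operator abstractly through a bounded sesquilinear form and then recognize it as $T_{\mu}$ via the reproducing property; this delivers both assertions---membership of $T_{\mu}f$ in $L^2_a(\mathcal{U},dV_{\alpha})$ and the identity (\ref{TmuKeisan})---simultaneously, and sidesteps any explicit appeal to Fubini's theorem. As a preliminary reduction, since $\mu$ is a Carleson measure for $L^p_a(\mathcal{U},dV_{\alpha})$ for some $p>0$ and this property is independent of $p$ by \cite[Theorem 3.2]{YamajiComp}, I may assume $\mu$ is a Carleson measure for $L^2_a(\mathcal{U},dV_{\alpha})$. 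Thus there is $M>0$ with $\int_{\mathcal{U}} \n{h}^2 \, d\mu \leq M \BN{h}^2$ for every $h \in L^2_a(\mathcal{U},dV_{\alpha})$; equivalently, $L^2_a(\mathcal{U},dV_{\alpha}) \subset L^2(\mathcal{U},d\mu)$ with bounded inclusion.

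Next I would introduce the sesquilinear form $\Lambda(f,g) := \int_{\mathcal{U}} f(w)\overline{g(w)} \, d\mu(w)$ and show it is bounded on $L^2_a(\mathcal{U},dV_{\alpha})$. For $f,g \in L^2_a(\mathcal{U},dV_{\alpha})$, the Cauchy--Schwarz inequality in $L^2(\mathcal{U},d\mu)$ combined with the Carleson estimate gives
\[
\n{\Lambda(f,g)} \leq \left( \int_{\mathcal{U}} \n{f}^2 \, d\mu \right)^{1/2} \left( \int_{\mathcal{U}} \n{g}^2 \, d\mu \right)^{1/2} \leq M \, \BN{f} \, \BN{g},
\]
so $\Lambda$ is finite and bounded. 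For fixed $f$, the map $g \mapsto \Lambda(f,g)$ is a bounded conjugate-linear functional on $L^2_a(\mathcal{U},dV_{\alpha})$, so by the Riesz representation theorem there is a unique element, which I call $Sf$, with $\Binner{Sf}{g} = \Lambda(f,g)$ for all $g$; the linearity and boundedness (by $M$) of $f \mapsto Sf$ are inherited from $\Lambda$.

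It then remains to identify $S$ with $T_{\mu}$. Fixing $z \in \mathcal{U}$ and choosing $g = K_{\mathcal{U}}^{(\alpha)}(\cdot,z) \in L^2_a(\mathcal{U},dV_{\alpha})$, the reproducing property together with the Hermitian symmetry $\overline{K_{\mathcal{U}}^{(\alpha)}(w,z)} = K_{\mathcal{U}}^{(\alpha)}(z,w)$ yields
\begin{align*}
Sf(z) &= \Binner{Sf}{K_{\mathcal{U}}^{(\alpha)}(\cdot,z)} = \Lambda\bigl( f, K_{\mathcal{U}}^{(\alpha)}(\cdot,z) \bigr) \\
&= \int_{\mathcal{U}} f(w) \overline{K_{\mathcal{U}}^{(\alpha)}(w,z)} \, d\mu(w) = \int_{\mathcal{U}} K_{\mathcal{U}}^{(\alpha)}(z,w) f(w) \, d\mu(w) = T_{\mu}f(z).
\end{align*}
Hence $T_{\mu}f = Sf$ as functions; in particular $T_{\mu}f \in L^2_a(\mathcal{U},dV_{\alpha})$, and $\Binner{T_{\mu}f}{g} = \Binner{Sf}{g} = \Lambda(f,g)$ is precisely (\ref{TmuKeisan}).

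I expect the only delicate point to be the convergence bookkeeping, which is exactly where the Carleson hypothesis does its work: it places both $f$ and the kernel section $K_{\mathcal{U}}^{(\alpha)}(\cdot,z)$---which lies in $L^2_a(\mathcal{U},dV_{\alpha})$ since it is the reproducing kernel---into $L^2(\mathcal{U},d\mu)$, so that $\Lambda(f,g)$ is an absolutely convergent integral for all admissible $g$ and the pointwise value $T_{\mu}f(z)$ makes sense. The algebraic manipulation of the reproducing identity is then routine, and casting the argument through Riesz representation rather than a direct double integral keeps the well-definedness questions cleanly separated from the computation.
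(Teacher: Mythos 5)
Your proposal is correct, and it reaches the result by a genuinely different route than the paper. The paper attacks the integral head-on: it applies the Carleson inequality to the $L^1_a$-function $K^{(\alpha)}_{\mathcal{U}}(\cdot,z)f$ to replace $d\mu$ by $dV_{\alpha}$ inside the inner integral, dominates $\n{T_{\mu}f}$ pointwise by $M\,P^+_{\U}\n{f}$, and invokes the boundedness of the positive Bergman operator $P^+_{\U}$ (Lemma \ref{PosBergBddness}, resting on B\'ekoll\'e--Kagou \cite{B-K} via the Siegel realization) to get $\BN{T_{\mu}f}\le M\N{P^+_{\U}}\BN{f}$; holomorphy of $T_{\mu}f$ is then verified by computing $\inner{T_{\mu}f}{g}_{\alpha}=\int_{\U}\overline{P_{\U}g}\,f\,d\mu$ with Fubini (justified by the absolute-convergence bound (\ref{Fubini0})) and testing against $g\in L^2_a(\U,dV_{\alpha})^{\perp}$, after which (\ref{TmuKeisan}) falls out of the same identity when $g$ is holomorphic. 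You instead normalize to the $p=2$ Carleson property (legitimate: the paper itself quotes the $p$-independence, \cite[Theorem 3.2]{YamajiComp}), represent the bounded form $\Lambda$ by Riesz, and identify $Sf=T_{\mu}f$ pointwise by testing against kernel sections; the absolute convergence of the defining integral for each fixed $z$ is exactly the Cauchy--Schwarz estimate in $L^2(d\mu)$ you record at the end, and since $L^2_a(\U,dV_{\alpha})$ is a reproducing kernel Hilbert space the identification holds at every point, so there is no gap. Your route is more economical: it bypasses Fubini and, more substantially, the entire positive-projection machinery of subsection \ref{sect240}, and it yields the clean operator bound $\N{T_{\mu}}\le M$ in terms of the Carleson constant, which would equally serve the implication $(c)\Rightarrow(a)$ of Theorem \ref{Bddness} that the paper extracts from (\ref{TmuBdd}). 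What the paper's heavier argument buys in exchange is the explicit pointwise majorization $\n{T_{\mu}f}\le M\,P^+_{\U}\n{f}$ and a scheme that survives outside the Hilbert-space setting (for instance on $L^p_a$ with $p\neq 2$), where Riesz representation is unavailable but Schur-type estimates for $P^+_{\U}$ still apply.
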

\begin{proof}
For $f \in L^2_a(\mathcal{U},dV_{\alpha})$, we have 
\begin{align}
\BN{T_{\mu}f}^2 
               &= \int_{\mathcal{U}} \n{ \int_\mathcal{U} K_{\mathcal{U}}^{(\alpha)}(z,w) f(w) \, d\mu(w)}^2 dV_{\alpha}(z)  \nonumber \\
            &\leq \int_{\mathcal{U}} \left( \int_\mathcal{U} \n{K_{\mathcal{U}}^{(\alpha)}(z,w)} \n{ f(w)} \, d\mu(w) \right)^2 dV_{\alpha}(z) .\label{MainThm1siki}
\end{align}
Since $K_{\mathcal{U}}^{(\alpha)}(\cdot,z)f$ is in $L_a^1(\mathcal{U},dV_{\alpha})$ 
and $\mu$ is a Carleson measure, there exists a positive constant $M_{\mu}$ such that 
\begin{align}
\int_\mathcal{U} \n{K_{\mathcal{U}}^{(\alpha)}(z,w)} \n{ f(w)} \, d\mu(w)
  \leq  M \int_\mathcal{U} \n{K_{\mathcal{U}}^{(\alpha)}(z,w)} \n{ f(w)} \, dV_{\alpha}(w) . \label{MainThm1.5siki}
\end{align}
Note that $M_{\mu}$ is independent of $z$ by the definition of the Carleson measure. 
Therefore, we have 
\begin{align}
\BN{T_{\mu}f}^2
   \leq  M^2 \, \int_{\mathcal{U}} \left( \int_\mathcal{U} \n{K_{\mathcal{U}}^{(\alpha)}(z,w)} \n{ f(w)} \, dV_{\alpha}(w) \right)^2 dV_{\alpha}(z) \label{MainThm2siki}
\end{align}
by (\ref{MainThm1siki}) and (\ref{MainThm1.5siki}). 
Moreover, the right hand side of (\ref{MainThm2siki}) is equal to $M^2 \BN{P_{\mathcal{U}}^+ f^+}^2$, where $f^+:=\n{f}$. 
Since $P_{\mathcal{U}}^+$ is a bounded operator by Lemma \ref{PosBergBddness}, we have 
\begin{align}
\BN{T_{\mu}f} \leq M \BN{P_{\mathcal{U}}^+ f^+} \leq M \N{P_{\mathcal{U}}^+}\BN{f} . \label{TmuBdd} 
\end{align}
Next, we prove $T_{\mu}f \in \mathcal{O}(\mathcal{U})$. 
Since $T_{\mu}f \in L^2(\mathcal{U},dV_{\alpha})$, it is enough to prove $\inner{T_{\mu} f}{g}_{\alpha} =0$ for any $g \in L^2_a(\mathcal{U},dV_{\alpha})^{\perp}$. 
We have 
\begin{align}
\inner{T_{\mu} f}{g}_{\alpha} 
   &= \int_{\mathcal{U}} \left\{ \int_{\mathcal{U}} K_{\mathcal{U}}^{(\alpha)}(z,w) f(w) \, d\mu(w) \right\} \overline{g(z)} \, dV_{\alpha}(z) \nonumber \\
   &= \int_{\mathcal{U}} \overline{\left\{ \int_{\mathcal{U}} K_{\mathcal{U}}^{(\alpha)}(w,z) g(z)\, dV_{\alpha}(z) \right\} } f(w) \, d\mu(w)  \nonumber \\
   &= \int_{\mathcal{U}} \overline{P_{\mathcal{U}}g(w) } f(w) \, d\mu(w)   \label{FubiniSiyou}
\end{align}
for any $f, g \in L^2_a(\mathcal{U},dV_{\alpha})$. 
Note that since 
\begin{align}
 \int_{\mathcal{U}}  \int_{\mathcal{U}} \n{K_{\mathcal{U}}^{(\alpha)}(w,z) g(z) f(w)}\, d\mu(w) dV_{\alpha}(z)    
   \leq M \N{P_{\mathcal{U}}^+} \BN{f} \BN{g} < \infty ,  \label{Fubini0}
\end{align}
the second equality of $(\ref{FubiniSiyou})$ follows from Fubini's theorem. 
Therefore, (\ref{FubiniSiyou}) is equal to $0$ if $g \in L^2_a(\mathcal{U},dV_{\alpha})^{\perp}$. 

In view of (\ref{FubiniSiyou}) for the case that $g$ is in $L^2_a(\mathcal{U},dV_{\alpha})$, we obtain (\ref{TmuKeisan}). 
\end{proof}

By using Proposition \ref{IntForm}, we prove Theorem \ref{Bddness}. 
\begin{Thm}[Theorem \ref{Bddness}] \label{MainBddness}
Let $\mu$ be a positive Borel measure on $\mathcal{U}$. 
Then the following conditions are all equivalent.\\
$(a)$ \ $T_{\mu}$ is a bounded operator on $L^2_a(\mathcal{U},dV_{\alpha})$.\\
$(b)$ \ $\widetilde{\mu}(z) $ is a bounded function on $\mathcal{U}$.\\
$(c)$ \ For all $p >0$, $\mu$ is a Carleson measure for $L^p_a(\mathcal{U},dV_{\alpha})$. \\
$(d)$ \ $\widehat{\mu}(z) $ is a bounded function on $\mathcal{U}$.
\end{Thm}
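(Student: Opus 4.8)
The plan is to prove the four conditions mutually equivalent by running the single cycle $(c)\Rightarrow(a)\Rightarrow(b)\Rightarrow(d)\Rightarrow(c)$. The first link is essentially free: Proposition~\ref{IntForm} already shows that if $\mu$ is a Carleson measure for $L^p_a(\U,dV_\alpha)$ for some $p>0$, then $T_\mu$ maps $L^2_a(\U,dV_\alpha)$ into itself with $\BN{T_\mu f}\le M\N{P_\U^+}\BN{f}$; so $(c)$, which asserts the Carleson property for all $p$, immediately gives $(a)$. For $(a)\Rightarrow(b)$ I would test the bounded operator against the normalized reproducing kernels. Since $\BN{k_z^{(\alpha)}}=1$, it is enough to verify the identity $\Binner{T_\mu k_z^{(\alpha)}}{k_z^{(\alpha)}}=\widetilde{\mu}(z)$ for every $z\in\U$, because then $\widetilde{\mu}(z)\le\N{T_\mu}$ uniformly. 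This identity comes from writing $T_\mu k_z^{(\alpha)}$ as its defining integral, interchanging the two integrations (licit by Tonelli's theorem, since $\mu\ge0$ and one may pass to absolute values of the integrand), and collapsing the inner integral by the reproducing property $\int_\U K_\U^{(\alpha)}(y,w)\,\overline{k_z^{(\alpha)}(y)}\,dV_\alpha(y)=\overline{k_z^{(\alpha)}(w)}$; what is left is exactly $\int_\U\n{k_z^{(\alpha)}(w)}^2\,d\mu(w)=\widetilde{\mu}(z)$.

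For $(b)\Rightarrow(d)$ I would bound the averaging function below by the Berezin transform. Discarding all of $\U$ outside a single metric ball gives $\widetilde{\mu}(z)\ge\int_{B(z,\rho)}\n{k_z^{(\alpha)}(w)}^2\,d\mu(w)$, so the whole point is a pointwise lower bound $\n{k_z^{(\alpha)}(w)}^2\ge c\,\Volb{B(z,\rho)}^{-1}$ for $w\in B(z,\rho)$, with $c$ independent of $z$. This is where the kernel estimates of \cite{I-Y} enter: for $w$ at Bergman distance at most $\rho$ from $z$ one has $\n{K_\U(w,z)}\asymp K_\U(z,z)$, hence $\n{k_z^{(\alpha)}(w)}^2\asymp K_\U(z,z)^{1+\alpha}$ by \eqref{weightedB}, while the same estimates yield $\Volb{B(z,\rho)}\asymp K_\U(z,z)^{-(1+\alpha)}$. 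Substituting gives $\widetilde{\mu}(z)\ge c\,\widehat{\mu}(z)$, so a bounded Berezin transform forces a bounded averaging function.

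The substance of the proof is the last link $(d)\Rightarrow(c)$. Fix $p>0$ and choose a $\rho$-lattice $\{a_j\}\subset\U$, that is, a sequence for which the balls $B(a_j,\rho)$ cover $\U$ while the dilates $B(a_j,2\rho)$ have overlap bounded by a constant $N$; such lattices exist because the Bergman metric is invariant under the automorphism group and $V_\alpha$ is doubling on metric balls. For holomorphic $f$ the function $\n{f}^p$ is subharmonic, so the sub-mean-value inequality together with the volume comparison $\Volb{B(w,\rho)}\asymp\Volb{B(a_j,2\rho)}$ for $w\in B(a_j,\rho)$ gives $\n{f(w)}^p\le C\,\Volb{B(a_j,2\rho)}^{-1}\int_{B(a_j,2\rho)}\n{f}^p\,dV_\alpha$ uniformly in $w\in B(a_j,\rho)$. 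Integrating this against $\mu$ over $B(a_j,\rho)$, using $\mu(B(a_j,\rho))/\Volb{B(a_j,2\rho)}\le\widehat{\mu}(a_j)\le\N{\widehat{\mu}}_\infty$, and summing over $j$ with the bounded overlap produces $\int_\U\n{f}^p\,d\mu\le CN\N{\widehat{\mu}}_\infty\int_\U\n{f}^p\,dV_\alpha$, valid for every $p>0$, which is $(c)$. I expect this covering step to be the main obstacle: the whole estimate rests on assembling the $\rho$-lattice, the doubling and volume comparison of $\Volb{B(\cdot,\rho)}$, and the sub-mean-value bound into a constant uniform over $j$, and each of these ingredients is supplied only by the sharp two-sided Bergman kernel estimates of \cite{I-Y}, which here stand in for the explicit computations available in the Harish-Chandra case treated by \cite{Zhu1}.
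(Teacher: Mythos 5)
Your proposal is correct in substance, and on the two implications that constitute this paper's actual new content it coincides with the paper: $(c)\Rightarrow(a)$ via Proposition \ref{IntForm} and the bound \eqref{TmuBdd} furnished by the boundedness of $P_{\U}^{+}$, and $(a)\Rightarrow(b)$ by testing against the normalized kernels to get $\widetilde{\mu}(z)=\Binner{T_{\mu}k_z^{(\alpha)}}{k_z^{(\alpha)}}\leq\N{T_{\mu}}$. Where you genuinely diverge is in the remaining links: the paper disposes of $(b)\Leftrightarrow(c)\Leftrightarrow(d)$ in one line by citing \cite[Theorem 3.3]{YamajiComp}, whereas you reprove $(b)\Rightarrow(d)$ from the pointwise inequality $\widehat{\mu}(z)\leq C\,\widetilde{\mu}(z)$ (this is exactly \cite[(3.4)]{YamajiComp}, which the paper itself invokes at the start of Section \ref{sect400}; your derivation from the two-sided kernel estimates of \cite{I-Y}, namely $\n{K_{\U}(w,z)}\asymp K_{\U}(z,z)$ on $B(z,\rho)$ together with $\Volb{B(z,\rho)}\asymp K_{\U}(z,z)^{-(1+\alpha)}$ and \eqref{weightedB}, is the standard one, and \cite[Lemma 2.3]{YamajiComp}, used in the proof of Lemma \ref{Mu}, is precisely the pointwise bound you need), and you reprove $(d)\Rightarrow(c)$ by the $\rho$-lattice, sub-mean-value, bounded-overlap argument in the style of \cite{Zhu1}, which is essentially the content of the cited result of \cite{YamajiComp}. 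So your route buys self-containedness at the price of assembling the lattice and sub-mean-value machinery; the paper's proof is shorter because it delegates exactly this machinery to the companion paper.

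One technical caution on your $(a)\Rightarrow(b)$: you justify the interchange of integrals by ``Tonelli, passing to absolute values,'' but Tonelli only equates the iterated integrals of the nonnegative integrand; to apply Fubini to the actual integrand you need finiteness of
\begin{align*}
\int_{\U}\int_{\U}\n{K_{\U}^{(\alpha)}(y,w)}\,\n{k_z^{(\alpha)}(w)}\,\n{k_z^{(\alpha)}(y)}\,d\mu(w)\,dV_{\alpha}(y),
\end{align*}
and hypothesis $(a)$ alone does not obviously provide it --- the paper establishes the analogous bound \eqref{Fubini0} only under the Carleson hypothesis. The paper's own computation \eqref{Bddness-1} avoids the issue entirely: since $T_{\mu}k_z^{(\alpha)}\in L^2_a(\U,dV_{\alpha})$ by boundedness, the reproducing property evaluates the inner product at the point, $\Binner{T_{\mu}k_z^{(\alpha)}}{k_z^{(\alpha)}}=T_{\mu}k_z^{(\alpha)}(z)/\sqrt{\BKer{z}{z}}$, and the defining integral of $T_{\mu}k_z^{(\alpha)}(z)$ collapses directly to $\widetilde{\mu}(z)$ with no interchange of integrals needed. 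Substituting that two-line computation makes your argument airtight; otherwise the proposal stands.
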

\begin{proof}
For the equivaliance of (b), (c) and (d), see \cite[Theorem 3.3]{YamajiComp}. 
Moreover, we see that $(c) \Longrightarrow (a)$ follows from (\ref{TmuBdd}). 
Therefore, it is enough to prove $(a) \Longrightarrow (b)$. 
Since $T_{\mu}$ is a bounded operator, $T_{\mu}k_z^{(\alpha)}$ is in $L^2_a(\mathcal{U},dV_{\alpha})$. 
Therefore, we have 
\begin{align}
\Binner{T_{\mu}k_z^{(\alpha)}}{k_z^{(\alpha)}} 
    &=  \frac{T_{\mu}k_z^{(\alpha)}(z)}{\sqrt{\BKer{z}{z}}} \label{Bddness-1}
\end{align}
by reproducing property. The right hand side of (\ref{Bddness-1}) is equal to 
\begin{align*}
 \frac{1}{\sqrt{\BKer{z}{z}}}   \int_{\U} K(z,w) k_z^{(\alpha)}(w) \, d\mu (w)   
     = \widetilde{\mu}(z)
\end{align*}
Hence, we obtain
\begin{align*}
\n{\widetilde{\mu}(z)}  = \vert \inner{T_{\mu}k_z^{(\alpha)}}{k_z^{(\alpha)}}_{\alpha} \vert 
      \leq \N{T_{\mu}} \N{k_z^{(\alpha)}}_{2,\alpha}^2 = \N{T_{\mu}} < \infty .
\end{align*}
\end{proof}

\section{Essential norm estimates for the Toeplitz operator} \label{sect400}
In this section, we prove Theorem \ref{EssEst}. 
By \cite[(3.4)]{YamajiComp}, there exists a constant $C>0$ such that $\widehat{\mu}(z) \leq C \widetilde{\mu}(z)$ holds for any $z \in \U$. 
Therefore, it is enough to prove
\begin{align*}
 \limsup_{z \rightarrow \partial \U}   \widetilde{\mu}(z)
        \leq \N{T_{\mu} }_e \leq C  \limsup_{z \rightarrow \partial \U}   \widehat{\mu}(z) .
\end{align*}
\subsection{Lower estimates for the essential norm}  \label{sect410}
First, we prove the lower estimate of the essential norm of $T_{\mu}$. 
\begin{Thm} \label{LowerEst}
If $T_{\mu}$ is bounded, one has
\begin{align*}
 \limsup_{z \rightarrow \partial \U}   \widetilde{\mu}(z)
        \leq \N{T_{\mu} }_e . 
\end{align*}
\end{Thm}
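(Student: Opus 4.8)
The plan is to use the normalized reproducing kernels $k_z^{(\alpha)}$ as a family of unit test vectors that converge weakly to $0$ at the boundary, which is the standard device for bounding an essential norm from below. First I would fix an arbitrary compact operator $K$ on $L^2_a(\U,dV_{\alpha})$. By Proposition \ref{NormalBK}$(iii)$ we know $k_z^{(\alpha)} \to 0$ weakly as $z \to \partial\U$, and since a compact operator maps weakly null sequences to norm null sequences, I would establish that
\[
\lim_{z \to \partial\U} \BN{K k_z^{(\alpha)}} = 0 .
\]
Concretely, if this failed there would be a sequence $z_n \to \partial\U$ with $\BN{K k_{z_n}^{(\alpha)}}$ bounded away from $0$, contradicting compactness of $K$ applied to the weakly null sequence $\{k_{z_n}^{(\alpha)}\}$.

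Next I would bound the operator norm $\N{T_{\mu} - K}$ from below by testing on $k_z^{(\alpha)}$. Since $\BN{k_z^{(\alpha)}} = 1$, one has $\N{T_{\mu} - K} \geq \BN{(T_{\mu} - K)k_z^{(\alpha)}} \geq \BN{T_{\mu} k_z^{(\alpha)}} - \BN{K k_z^{(\alpha)}}$ for every $z \in \U$. Taking $\limsup_{z\to\partial\U}$ and using the vanishing of $\BN{K k_z^{(\alpha)}}$ from the first step gives
\[
\N{T_{\mu} - K} \geq \limsup_{z\to\partial\U} \BN{T_{\mu} k_z^{(\alpha)}} .
\]

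It then remains to recognize the right-hand side as a majorant of $\widetilde{\mu}$. By the Cauchy--Schwarz inequality together with $\BN{k_z^{(\alpha)}} = 1$, we have $\BN{T_{\mu} k_z^{(\alpha)}} \geq \n{\Binner{T_{\mu} k_z^{(\alpha)}}{k_z^{(\alpha)}}}$; and applying the integral identity of Proposition \ref{IntForm} with $f = g = k_z^{(\alpha)}$ yields
\[
\Binner{T_{\mu} k_z^{(\alpha)}}{k_z^{(\alpha)}} = \int_{\U} \n{k_z^{(\alpha)}(w)}^2 \, d\mu(w) = \widetilde{\mu}(z),
\]
which is nonnegative because $\mu$ is a positive measure. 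Hence $\BN{T_{\mu} k_z^{(\alpha)}} \geq \widetilde{\mu}(z)$, and combining with the previous display gives $\N{T_{\mu} - K} \geq \limsup_{z\to\partial\U}\widetilde{\mu}(z)$. Since $K$ was an arbitrary compact operator, taking the infimum over all such $K$ produces $\N{T_{\mu}}_e \geq \limsup_{z\to\partial\U}\widetilde{\mu}(z)$, as desired.

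I expect the only point requiring genuine care to be the first step, namely the passage from weak nullity of $\{k_z^{(\alpha)}\}$ to norm nullity of $\{K k_z^{(\alpha)}\}$, since everything afterward is a direct application of Cauchy--Schwarz combined with the already-established reproducing identity. It is also worth noting that Proposition \ref{IntForm} is applicable here: it requires $\mu$ to be a Carleson measure, and this is guaranteed by Theorem \ref{MainBddness} under the standing hypothesis that $T_{\mu}$ is bounded.
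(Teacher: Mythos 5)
Your proposal is correct and follows essentially the same route as the paper: test $T_{\mu}-K$ on the weakly null family $\{k_z^{(\alpha)}\}$, kill the compact term, and bound $\BN{T_{\mu}k_z^{(\alpha)}}$ below by $\widetilde{\mu}(z)$ via Cauchy--Schwarz. The only cosmetic difference is that you invoke Proposition \ref{IntForm} (justified through the Carleson property from Theorem \ref{MainBddness}) to get $\Binner{T_{\mu}k_z^{(\alpha)}}{k_z^{(\alpha)}}=\widetilde{\mu}(z)$, whereas the paper obtains the same identity directly from the reproducing property, as in its proof of Theorem \ref{MainBddness}.
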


\begin{proof}
Take a compact operator $K$ on $L^2_a(\mathcal{U},dV_{\alpha})$ arbitrarily. 
Since $\{ k_{z}^{(\alpha)} \}$ converges to $0$ weakly in $L^2_a(\mathcal{U},dV_{\alpha})$ as 
$z \rightarrow \partial \mathcal{U}$ (see Proposition \ref{NormalBK}), 
we have $\Vert {K k_{z}^{(\alpha)}} \Vert_{\alpha} \rightarrow 0$ as $z \rightarrow \partial \mathcal{U}$. 
Hence, we obtain
\begin{align}
\N{T_{\mu}  - K}  
\geq    \limsup_{z \rightarrow \partial \mathcal{U}}  \BN{(T_{\mu}  - K) k_{z}^{(\alpha)}}
\geq    \limsup_{z \rightarrow \partial \mathcal{U}}  \BN{T_{\mu}   k_{z}^{(\alpha)}} .  \label{Sita}
\end{align}
Since (\ref{Sita}) holds for every compact operator $K$, it follows that 
\begin{align}
\N{T_{\mu} }_e 
  \geq  \limsup_{z \rightarrow \partial \mathcal{U}} \BN{T_{\mu}   k_{z}^{(\alpha)}} . \label{Sita2}
\end{align}
On the other hand, since $T_{\mu}$ is bounded, we have 
\begin{align}
\widetilde{\mu}(z)  = \vert \inner{T_{\mu}k_z^{(\alpha)}}{k_z^{(\alpha)}}_{\alpha} \vert 
      \leq \N{T_{\mu}k_z^{(\alpha)}} .   \label{Sita3}
\end{align}
From (\ref{Sita2}) and (\ref{Sita3}), we complete the proof. 
\end{proof}
\subsection{Upper estimates for the essential norm}  \label{sect420}
In this section, we prove the upper estimate of the essential norm of $T_{\mu}$. 
From now on, we fix a constant $\rho >0$ and consider the case $r>\rho$. 
Let $\U_r := \U \backslash B(t,r)$ and $d\mu_r(z) := \chi_{\U_r}(z) d\mu(z)$, 
where $\chi_{\U_r}$ is the characteristic function on $\U_r$.  
First, we show the following lemma. 
\begin{Lem} \label{Mu}
If $T_{\mu}$ is bounded, one has
\begin{align*}
\sup_{z \in \U} \widehat{\mu_r}(z) \leq C \sup_{z \in \U_{r-\rho}} \widehat{\mu}(z) ,  
\end{align*}
where $C$ is a positive constant that is independent of $r$. 
\end{Lem}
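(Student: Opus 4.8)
The plan is to exploit two elementary facts: that restricting $\mu$ to $\U_r$ can only shrink the averaging function pointwise, and that $\widehat{\mu_r}(z)$ can be nonzero only for $z$ lying in $\U_{r-\rho}$. Writing out the definitions, for every $z \in \U$ one has
\[
\widehat{\mu_r}(z) = \frac{\mu_r(B(z,\rho))}{\Volb{B(z,\rho)}} = \frac{\mu\bigl(B(z,\rho) \cap \U_r\bigr)}{\Volb{B(z,\rho)}} \le \frac{\mu(B(z,\rho))}{\Volb{B(z,\rho)}} = \widehat{\mu}(z),
\]
since $d\mu_r = \chi_{\U_r}\,d\mu \le d\mu$ and the two denominators coincide. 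Thus the pointwise bound $\widehat{\mu_r} \le \widehat{\mu}$ on all of $\U$ is immediate, and the only real content is to localize the $z$ that contribute.

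First I would dispose of the non-contributing points. If $z \in B(\cent, r-\rho)$, that is $d_{\U}(\cent, z) \le r-\rho$, then for any $w \in B(z,\rho)$ the triangle inequality gives $d_{\U}(\cent, w) \le d_{\U}(\cent, z) + d_{\U}(z,w) \le (r-\rho) + \rho = r$, so $w \in B(\cent, r)$ and hence $w \notin \U_r$. Therefore $B(z,\rho) \cap \U_r = \emptyset$, which forces $\mu_r(B(z,\rho)) = 0$ and $\widehat{\mu_r}(z) = 0$. Consequently $\widehat{\mu_r}$ can be nonzero only on the complementary region $\U_{r-\rho} = \U \setminus B(\cent, r-\rho)$.

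Combining the two observations, for each $z \in \U$ either $\widehat{\mu_r}(z) = 0$, or $z \in \U_{r-\rho}$ and $\widehat{\mu_r}(z) \le \widehat{\mu}(z) \le \sup_{z \in \U_{r-\rho}} \widehat{\mu}(z)$; taking the supremum over $z \in \U$ yields the claim, in fact with $C = 1$. The hypothesis that $T_{\mu}$ be bounded enters only to guarantee, via Theorem \ref{Bddness}, that $\widehat{\mu}$ is a bounded function, so that both sides are finite and the estimate is meaningful. I do not expect a genuine obstacle here: the one point needing care is the bookkeeping of the radii, namely making sure the shift is exactly by $\rho$ so that the triangle inequality produces precisely the level set $\U_{r-\rho}$, and confirming that the resulting constant does not depend on $r$. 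This last property is the real assertion of the lemma and is exactly what will be needed when $r \to \infty$ in the upper estimate of $\N{T_{\mu}}_e$.
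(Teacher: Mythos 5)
Your proof is correct, and it is genuinely more elementary than the one in the paper. You use only two observations: monotonicity of measures, $d\mu_r = \chi_{\U_r}\,d\mu \le d\mu$ with identical denominators $\Volb{B(z,\rho)}$, giving $\widehat{\mu_r}(z) \le \widehat{\mu}(z)$ pointwise; and the triangle inequality for the Bergman distance, showing $B(z,\rho) \cap \U_r = \emptyset$ whenever $d_{\U}(\cent,z) \le r-\rho$, so that $\widehat{\mu_r}$ vanishes off $\U_{r-\rho}$. This yields the lemma with $C = 1$. The paper instead takes a detour through kernel estimates: it bounds $1/\Volb{B(z,\rho)}$ from above by $C\,\vert k_z^{(\alpha)}(w)\vert^2$ on $B(z,\rho)$ (\cite[Lemma 2.3]{YamajiComp}), applies a sub-mean-value inequality for $\vert k_z^{(\alpha)}\vert^2$ (\cite[Lemma 2.5]{YamajiComp}), uses Fubini's theorem and the normalization $\BN{k_z^{(\alpha)}} = 1$, and only then invokes the same triangle-inequality localization you use --- applied there to the auxiliary integration variable $u$ rather than directly to $z$ --- producing an unspecified constant $C$ from the kernel estimates. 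Your route buys a sharper constant, avoids all machinery specific to the domain, and makes transparent your correct side remark that the boundedness of $T_{\mu}$ plays no role in the inequality itself beyond guaranteeing via Theorem \ref{Bddness} that $\widehat{\mu}$ is finite; the paper's heavier argument buys nothing extra for this particular statement, though its Berezin-transform technique is of the kind that would survive if the two averaging radii were decoupled. Your bookkeeping of the closed disks $B(z,\rho)$ versus the strict inequality defining $\U_r = \U \setminus B(\cent,r)$ is also handled correctly at the boundary cases.
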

\begin{proof}
By the definition of the averaging function and \cite[Lemma 2.3]{YamajiComp}, we have
\begin{align}
 \widehat{\mu_r}(z)
     &= \frac{1 }{\Volb{B(z,\rho)}} \int_{B(z,\rho) \cap \U_r}  \, d\mu(w)   \nonumber \\
     &\leq C\int_{B(z,\rho) \cap \U_r} \n{k_{z}^{(\alpha)}(w)}^2 \, d\mu(w) .   \label{WWW}
\end{align}
By \cite[Lemma 2.5]{YamajiComp}, we have 
\begin{align*}
\n{k_{z}^{(\alpha)}(w)}^2 \leq \frac {C}{\Volb{B(w,\rho)}} \int_{B(w,\rho)} \n{k_{z}^{(\alpha)}(u)}^2 \,  dV_{\alpha}(u) 
\end{align*}
for any $z \in B(u,\rho)$. Therefore, the right hand side of (\ref{WWW}) is less than or equal to 
\begin{align}
{} & C \int_{B(z,\rho) \cap \U_r} \int_{\U} 
          \frac {\chi_{B(w,\rho)}(u) \vert k_{z}^{(\alpha)}(u) \vert^2 }{\Volb{B(w,\rho)}} 
           \, dV_{\alpha}(u)  d\mu(w) \nonumber \\
    &= C \int_{\U}  \left\{ \int_{B(z,\rho) \cap \U_r}   
            \frac {\chi_{B(u,\rho)}(w) }{\Volb{B(w,\rho)}} d\mu(w) \right\}
             \n{k_{z}^{(\alpha)}(u)}^2 \,  dV_{\alpha}(u) \nonumber \\
    &\leq C   \sup_{u \in \U}  \left\{ \int_{B(z,\rho) \cap \U_r}   
            \frac {\chi_{B(u,\rho)}(w) }{\Volb{B(w,\rho)}} d\mu(w) \right\}
              \int_{\U} \n{k_{z}^{(\alpha)}(u)}^2 \,  dV_{\alpha}(u) \nonumber \\
   &= C   \sup_{u \in \U}  \left\{ \int_{B(z,\rho) \cap B(u,\rho) \cap \U_r}   
            \frac {1 }{\Volb{B(w,\rho)}} d\mu(w) \right\}   .  \label{WWW2}
\end{align}
Here, assume $w \in B(z,\rho) \cap B(u,\rho) \cap \U_r$. 
Then, since $d_{\U} (w,u) \leq \rho$ and $d_{\U}(t,w)>r$, we obtain 
\begin{align*}
d_{\U} (t,u) \geq d_{\U} (t,w) - d_{\U} (u,w) > r-\rho .
\end{align*}
Therefore, $u \in B(\cent, r-\rho)$. 
Hence, we obtain $ B(z,\rho)  \cap B(u,\rho) \cap \U_r= \emptyset$ for any $u \in \U \backslash \U_{r-\rho}$. 
Therefore, the right hand side of (\ref{WWW2}) is equal to 
\begin{align*}
 {}&  C   \sup_{u \in B(\cent, r-\rho)}  \left\{ \int_{B(z,\rho)  \cap B(u,\rho) \cap \U_r}   
              \frac {1 }{\Volb{B(w,\rho)}} d\mu(w) \right\}   \nonumber \\
    &  \leq  C   \sup_{u \in \U_{r-\rho}} \frac {1 }{\Volb{B(u,\rho)}} \left\{ \int_{B(z,\rho)  \cap B(u,\rho) \cap \U_r}   
               d\mu(w) \right\}   \nonumber \\
              &\leq  C   \sup_{u \in \U_{r-\rho}} \widehat{\mu}(u) .  
\end{align*}
Hence, we have 
$ \widehat{\mu_r}(z)  \leq  C   \sup_{u \in \U_{r-\rho}} \widehat{\mu}(u)$.
\end{proof}
Next, we denote the operators $Q_n$ and $R_n$ on $L^2_a(\U,dV_{\alpha})$. 
Suppose $\{ e_n \}$ is a complete orthonormal system of $L^2_a(\U,dV_{\alpha})$. 
For $n \in \mathbb{N}$ and $f \in L^2_a(\mathcal{U},dV_{\alpha})$, we denote $Q_n$ by
\begin{align*}
Q_n f(z) := \sum_{j=1}^{n} \Binner{f}{e_j} e_j(z) .
\end{align*}
The operator $Q_n$ is compact on $L^2_a(\U,dV_{\alpha})$. Let $R_n := I - Q_n$. 
Then we have 
$$\lim_{n \rightarrow \infty} \Vert R_n f \Vert_{\alpha} = 0$$
for each $f \in L^2_a(\U,dV_{\alpha})$.  
Moreover, we have $R_n^{\ast} = R_n$ and $R_n^{2} = R_n$. 

\begin{Lem}  \label{Lemma41}
If $T_{\mu}$ is bounded, one has
\begin{align*}
 \lim_{n \rightarrow \infty}  \sup_{\BN{f}=1} \N{T_{\mu}R_n  f}_{L^2(d\mu)}   \leq C \sup_{z \in \U_{r-\rho}} \widehat{\mu}(z) .
 \end{align*}
\end{Lem}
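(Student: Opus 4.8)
The plan is to expand $\N{T_{\mu}R_nf}_{L^2(d\mu)}^2$, split the integral over the compact ball $B(t,r)$ and over its complement $\U_r$, estimate the two pieces, and close with a self-improving inequality that forces the correct \emph{linear} dependence on $\sup\widehat\mu$. Throughout write $A:=\N{T_{\mu}R_nf}_{L^2(d\mu)}$ for a fixed $f$ with $\BN{f}=1$. The first ingredient comes from Proposition \ref{IntForm}: since $\mu$ is positive, $T_{\mu}$ is self-adjoint, and applying (\ref{TmuKeisan}) to the pair $R_nf,\,T_{\mu}R_nf\in L^2_a(\U,dV_{\alpha})$ gives
\begin{align*}
\BN{T_{\mu}R_nf}^2=\int_\U R_nf(w)\,\overline{T_{\mu}R_nf(w)}\,d\mu(w)\le\N{R_nf}_{L^2(d\mu)}\,A\le C\,A,
\end{align*}
where the last step uses that $\mu$ is a Carleson measure, so $\N{R_nf}_{L^2(d\mu)}\le C\BN{R_nf}\le C$. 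This bound on $\BN{T_{\mu}R_nf}^2$ will be fed back into the tail estimate.

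For the tail piece note $\int_{\U_r}\n{T_{\mu}R_nf}^2\,d\mu=\int_\U\n{T_{\mu}R_nf}^2\,d\mu_r$. The restriction $\mu_r=\chi_{\U_r}\mu$ is again a Carleson measure, with Carleson constant comparable to $\sup_{z\in\U}\widehat{\mu_r}(z)$ (cf.\ the characterization behind Theorem \ref{Bddness}); since $T_{\mu}R_nf\in L^2_a(\U,dV_{\alpha})$ we obtain, using Lemma \ref{Mu},
\begin{align*}
\int_{\U_r}\n{T_{\mu}R_nf}^2\,d\mu\le C\Big(\sup_{z\in\U}\widehat{\mu_r}(z)\Big)\BN{T_{\mu}R_nf}^2\le C\Big(\sup_{z\in\U_{r-\rho}}\widehat\mu(z)\Big)\BN{T_{\mu}R_nf}^2 .
\end{align*}
Combining with $\BN{T_{\mu}R_nf}^2\le C\,A$ from the first step, the tail piece is at most $C\,\beta_r\,A$, where $\beta_r:=\sup_{z\in\U_{r-\rho}}\widehat\mu(z)$ and $C$ is independent of $r$.

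For the compact piece I would show $\varepsilon_n:=\sup_{\BN{f}=1}\int_{B(t,r)}\n{T_{\mu}R_nf}^2\,d\mu\to0$ as $n\to\infty$. Using the reproducing property together with the self-adjointness of $T_{\mu}$ and of $R_n$, for $z\in B(t,r)$ one has $\n{T_{\mu}R_nf(z)}=\n{\Binner{f}{R_nT_{\mu}\BKer{\cdot}{z}}}\le\BN{R_nT_{\mu}\BKer{\cdot}{z}}$. The map $z\mapsto T_{\mu}\BKer{\cdot}{z}$ is continuous from the compact set $B(t,r)$ into $L^2_a(\U,dV_{\alpha})$, so its image is norm-compact; since $R_n\to0$ in the strong operator topology and such convergence is uniform on norm-compact sets, $\sup_{z\in B(t,r)}\BN{R_nT_{\mu}\BKer{\cdot}{z}}\to0$. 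As $\mu(B(t,r))<\infty$ (local finiteness follows from the Carleson property together with Proposition \ref{NormalBK}$(i)$), integration gives $\varepsilon_n\to0$ uniformly over $\BN{f}=1$.

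Putting the pieces together yields, for every $f$ with $\BN{f}=1$, the quadratic inequality $A^2\le\varepsilon_n+C\beta_r A$. Solving for $A$ gives $A\le\tfrac12\big(C\beta_r+\sqrt{C^2\beta_r^2+4\varepsilon_n}\big)$; taking $\sup_{\BN{f}=1}$ and then letting $n\to\infty$ collapses the error term ($\varepsilon_n\to0$) and leaves $\lim_{n\to\infty}\sup_{\BN{f}=1}A\le C\beta_r=C\sup_{z\in\U_{r-\rho}}\widehat\mu(z)$, as required. I expect the main obstacle to be twofold: first, arranging the estimate so that the final bound is \emph{linear} in $\sup\widehat\mu$ rather than of square-root order---this is exactly what the feedback bound $\BN{T_{\mu}R_nf}^2\le C A$ and the resulting self-improving inequality accomplish; second, justifying the uniform (over the unit ball) vanishing of the compact piece, which rests on the continuity of $z\mapsto T_{\mu}\BKer{\cdot}{z}$ and on strong operator convergence being uniform on compacta.
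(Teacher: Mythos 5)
Your argument is correct, and its skeleton coincides with the paper's: the same decomposition of $\int_{\U}\n{T_{\mu}R_nf}^2\,d\mu$ into the piece over $\U\setminus\U_r=B(t,r)$ and the tail over $\U_r$, the same reduction of the tail via the Carleson-type bound $\int_{\U}\n{g}^2\,d\mu_r\le C\bigl(\sup_{\U}\widehat{\mu_r}\bigr)\BN{g}^2$ followed by Lemma \ref{Mu}, and the same adjoint/reproducing-kernel estimate $\n{T_{\mu}R_nf(z)}\le\BN{f}\,\BN{R_nT_{\mu}^{\ast}\BKalp{z}}$ on the compact piece. You differ in two places. For the compact piece the paper avoids your compactness argument: it dominates $\BN{R_nT_{\mu}^{\ast}\BKalp{z}}^2\le\N{T_{\mu}}^2\BKer{z}{z}$, notes that $\BKer{z}{z}$ is bounded on $\U\setminus\U_r$, and invokes dominated convergence; your route (norm-compactness of $\{\,T_{\mu}\BKalp{z} : z\in B(t,r)\,\}$, uniformity of the strong convergence $R_n\to 0$ on norm-compact sets, and $\mu(B(t,r))<\infty$) is valid but heavier machinery for the same limit $\varepsilon_n\to 0$. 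The genuine difference is your feedback step. The paper estimates the tail with the crude bound $\BN{T_{\mu}R_nf}\le\N{T_{\mu}}\BN{f}$, which gives $\sup_{\BN{f}=1}\int_{\U_r}\n{T_{\mu}R_nf}^2\,d\mu\le C\beta_r$ with $\beta_r:=\sup_{z\in\U_{r-\rho}}\widehat{\mu}(z)$ --- a linear bound for the \emph{square} of the $L^2(d\mu)$-norm, hence only a square-root bound for the norm itself, whereas the lemma asserts a linear bound for the norm. Your inequality $\BN{T_{\mu}R_nf}^2\le\N{R_nf}_{L^2(d\mu)}\N{T_{\mu}R_nf}_{L^2(d\mu)}\le CA$, obtained from Proposition \ref{IntForm} and the Carleson property, together with the resulting self-improving inequality $A^2\le\varepsilon_n+C\beta_r A$, is exactly what restores the correct homogeneity and proves the statement as literally written, with $C$ depending on the Carleson constant of $\mu$ but independent of $r$, as is needed for the later limit $r\to\infty$. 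In the paper this Cauchy--Schwarz step is deferred to the proof of Theorem \ref{UpperEst}, where $\BN{T_{\mu}R_nf}^2\le\N{R_nf}_{L^2(d\mu)}\N{T_{\mu}R_nf}_{L^2(d\mu)}$ converts the two $L^2(d\mu)$ estimates into $\N{T_{\mu}}_e^2\le C\bigl(\sup_{\U_{r-\rho}}\widehat{\mu}\bigr)^2$; performing the improvement inside the lemma, as you do, buys a proof matching the lemma's stated linear form at the cost of the quadratic-inequality bookkeeping, while the paper's version is shorter if one is content to carry squared quantities into the next proof. One cosmetic remark: self-adjointness of $T_{\mu}$ is not needed for your first display, since (\ref{TmuKeisan}) applied to the pair $R_nf$ and $T_{\mu}R_nf$ gives the identity directly.
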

\begin{proof}
First, we prove 
\begin{align}
 \lim_{n \rightarrow \infty}  \sup_{\BN{f}=1} \int_{\U \backslash  {\U}_r} \n{T_{\mu}R_n  f(z)}^2 \, d\mu(z) =0.    \label{Suff1}
\end{align}
Since $T_{\mu}R_n  f \in L^2_a(\U,dV_{\alpha})$ by the boundedness of $T_{\mu}$, we obtain 
\begin{align*}
\n{T_{\mu}R_n  f (z)} =
\n{\Binner{T_{\mu}R_n  f}{\BKalp{z}}}
 = \n{\Binner{f}{R_n T_{\mu}^{\ast} \BKalp{z}}}
 \leq \BN{f} \BN{R_n T_{\mu}^{\ast} \BKalp{z}}  ,
\end{align*}
where the first equality follows from the reproducing property. 
Hence, we have 
\begin{align}
   \sup_{\BN{f}=1} \int_{\U \backslash  {\U}_r} \n{T_{\mu}R_n  f(z)}^2 \, d\mu(z) 
     \leq  \int_{\U \backslash  {\U}_r}   \BN{R_n T_{\mu}^{\ast} \BKalp{z}}^2 \, d\mu(z) .  \label{T1}
\end{align}
Here,we have
\begin{align*}
 \BN{R_n T_{\mu}^{\ast} \BKalp{z}}^2
 \leq  \N{T_{\mu}}^2 \BN{\BKalp{z}}^2  = \N{T_{\mu}}^2 \BKer{z}{z} . 
\end{align*}
Since  $\BKer{z}{z}$ is in $L^{\infty}(\U \backslash  {\U}_r)$, we have 
\begin{align}
 \lim_{n \rightarrow \infty} \int_{\U \backslash  {\U}_r}   \BN{R_n T_{\mu}^{\ast} \BKalp{z}}^2 \, d\mu(z) 
  =  \int_{\U \backslash  {\U}_r}    \lim_{n \rightarrow \infty} \BN{R_n T_{\mu}^{\ast} \BKalp{z}}^2 \, d\mu(z) 
 =0  \label{T2}
\end{align}
by Lebesgue's dominated convergence theorem. 
We obtain (\ref{Suff1}) from (\ref{T1}) and (\ref{T2}).

Next, we prove 
\begin{align}
\sup_{\BN{f}=1} \int_{ {\U}_r} \n{T_{\mu}R_n  f(z)}^2 \, d\mu(z)  \leq C \sup_{z \in \U_{r-\rho}} \widehat{\mu}(z)  .  \label{Suff2}
 \end{align}
By \cite[Lemma 3.1]{YamajiComp}, we have 
\begin{align*}
 \int_{ {\U}_r} \n{T_{\mu}R_n  f(z)}^2 \, d\mu(z) 
    &= \int_{ \U} \n{T_{\mu}R_n  f(z)}^2 \, d\mu_r(z)   \\
    &\leq C  \int_{\U} \widehat{\mu_r}(z) \n{T_{\mu}R_n  f(z)}^2 \, dV(z)  \\
    &\leq C \sup_{z \in \U} \widehat{\mu_r}(z) \BN{T_{\mu}R_n f}^2   \\  
    &\leq C \BN{f}^2  \sup_{z \in \U} \widehat{\mu_r}(z)  \\  
    &\leq C \BN{f}^2  \sup_{z \in \U_{r-\rho}} \widehat{\mu}(z)  , 
 \end{align*}
where the last inequality follows from Lemma \ref{Mu}. 
Therefore, we obtain (\ref{Suff2}). 

By (\ref{Suff1}) and (\ref{Suff2}), we have complete the proof. 
\end{proof}

Similarly, we have 
\begin{align}
 \lim_{n \rightarrow \infty}  \sup_{\BN{f}=1} \N{R_n  f}_{L^2(d\mu)}   \leq C \sup_{z \in \U_{r-\rho}} \widehat{\mu}(z) \label{Rnf}
\end{align}
if $T_{\mu}$ is bounded. 

\begin{Thm} \label{UpperEst}
If $T_{\mu}$ is bounded, one has
\begin{align*}
 \N{T_{\mu} }_e \leq  C \limsup_{z \rightarrow \partial \U}   \widehat{\mu}(z)       .
\end{align*}
\end{Thm}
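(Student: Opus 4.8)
The plan is to dominate $\N{T_{\mu}}_e$ by $\N{T_{\mu} R_n}$ for every $n$, and then to estimate this operator norm by coupling Proposition \ref{IntForm} with the two $L^2(d\mu)$-bounds already in hand. Since $Q_n$ has finite rank, the operator $T_{\mu} Q_n$ is compact, and therefore
\[
\N{T_{\mu}}_e \leq \N{T_{\mu} - T_{\mu} Q_n} = \N{T_{\mu} R_n}
\]
for every $n \in \mathbb{N}$. Thus it suffices to control $\N{T_{\mu} R_n} = \sup_{\BN{f}=1} \BN{T_{\mu} R_n f}$ as $n \to \infty$.

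First I would fix $f$ with $\BN{f}=1$ and rewrite the square of the norm. Because $T_{\mu}$ is assumed bounded, $T_{\mu} R_n f \in L^2_a(\mathcal{U},dV_{\alpha})$, so Proposition \ref{IntForm}, applied to the pair $(R_n f, T_{\mu} R_n f)$, gives
\[
\BN{T_{\mu} R_n f}^2 = \Binner{T_{\mu} R_n f}{T_{\mu} R_n f}
    = \int_{\mathcal{U}} (R_n f)(w)\, \overline{(T_{\mu} R_n f)(w)}\, d\mu(w).
\]
An application of the Cauchy--Schwarz inequality in $L^2(d\mu)$ then yields the key factorization
\[
\BN{T_{\mu} R_n f}^2 \leq \N{R_n f}_{L^2(d\mu)}\, \N{T_{\mu} R_n f}_{L^2(d\mu)} .
\]
Taking the supremum over $\BN{f}=1$ and bounding the supremum of the product by the product of the suprema, I would let $n \to \infty$ and invoke (\ref{Rnf}) together with Lemma \ref{Lemma41}. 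Since $\N{T_{\mu}}_e \leq \N{T_{\mu} R_n}$ for all $n$, this produces
\[
\N{T_{\mu}}_e^2 \leq \limsup_{n \to \infty} \N{T_{\mu} R_n}^2
   \leq \Big( C \sup_{z \in \mathcal{U}_{r-\rho}} \widehat{\mu}(z) \Big)^2 ,
\]
and hence $\N{T_{\mu}}_e \leq C \sup_{z \in \mathcal{U}_{r-\rho}} \widehat{\mu}(z)$ for each fixed $r > \rho$.

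Finally I would let $r \to \infty$. Because the Bergman balls $B(\cent,r)$ exhaust $\mathcal{U}$, the sets $\mathcal{U}_{r-\rho} = \mathcal{U} \setminus B(\cent, r-\rho)$ shrink toward $\partial \mathcal{U}$, so $\sup_{z \in \mathcal{U}_{r-\rho}} \widehat{\mu}(z)$ decreases to $\limsup_{z \to \partial \mathcal{U}} \widehat{\mu}(z)$, which gives the claimed estimate. The genuinely delicate work lies upstream, in Lemma \ref{Mu}, Lemma \ref{Lemma41}, and (\ref{Rnf}); within this theorem the only subtle points are the factorization through Proposition \ref{IntForm} and Cauchy--Schwarz, which converts a $dV_{\alpha}$-operator-norm into two controllable $L^2(d\mu)$-quantities, and the harmless interchange of the suprema. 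I expect the main obstacle to be simply keeping the constants uniform in $r$ so that the final passage $r \to \infty$ is legitimate.
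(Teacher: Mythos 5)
Your proposal is correct and follows essentially the same route as the paper: the compactness of $T_{\mu}Q_n$ to reduce to $\N{T_{\mu}R_n}$, the factorization $\BN{T_{\mu}R_n f}^2 \leq \N{R_n f}_{L^2(d\mu)}\,\N{T_{\mu}R_n f}_{L^2(d\mu)}$ via Proposition \ref{IntForm} and Cauchy--Schwarz, then $n \to \infty$ using Lemma \ref{Lemma41} and (\ref{Rnf}), and finally $r \to \infty$ (the uniformity in $r$ you worry about is exactly what Lemma \ref{Mu} guarantees). No gaps.
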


\begin{proof}
Take any $n \in \mathbb{N}$. 
Since $Q_n$ is compact, $T_{\mu}  Q_n$ is also compact. 
Therefore, we have 
\begin{align*}
\N{T_{\mu}  }_e = \N{T_{\mu} R_n + T_{\mu}  Q_n }_e 
  = \N{T_{\mu}  R_n  }_e 
  \leq \N{T_{\mu}  R_n  }  . 
\end{align*}
Since 
\begin{align*}
\BN{T_{\mu}  R_n  f}^2  \leq  \N{R_n  f}_{L^2(d\mu)} \N{T_{\mu} R_n  f}_{L^2(d\mu)}
\end{align*}
by Proposition \ref{IntForm}, we have 
\begin{align*}
\N{T_{\mu}  }_e^2  \leq 
\N{T_{\mu}  R_n  }^2
  \leq \sup_{\BN{f}=1} \N{R_n  f}_{L^2(d\mu)}   \sup_{\BN{f}=1} \N{T_{\mu}R_n  f}_{L^2(d\mu)} . 
 \end{align*}
Take $n \rightarrow \infty$. Then, we obtain  
\begin{align*}
\N{T_{\mu}  }_e^2  \leq   C \left( \sup_{z \in \U_{r-\rho}} \widehat{\mu}(z)  \right)^2
\end{align*}
from Lemma \ref{Lemma41} and (\ref{Rnf}). 
Letting $r \rightarrow \infty$, we get
\begin{align*}
\N{T_{\mu}  }_e  \leq C  \limsup_{z \rightarrow \partial \U}  \widehat{\mu}(z)  . 
 \end{align*}
\end{proof}

We obtain Theorem \ref{EssEst} from Theorems \ref{LowerEst} and \ref{UpperEst}.
By using Theorem \ref{EssEst}, we obtain necessary and sufficient conditions for $T_{\mu}$ to be compact.
\begin{Cor} \label{MainCptness}  
Let $\mu$ be a finite positive Borel measure on $\mathcal{U}$. 
Then the following conditions are all equivalent.\\
$(a)$ \ $T_{\mu}$ is a compact operator on $L^2_a(\mathcal{U},dV_{\alpha})$.\\
$(b)$ \ $\widetilde{\mu}(z)  \rightarrow 0$ as $z \rightarrow \partial \mathcal{U}$.\\ 
$(c)$ \ For all $p>0$, $\mu$ is a vanishing Carleson measure for $L^p_a(\mathcal{U},dV_{\alpha})$. \\
$(d)$ \ $\widehat{\mu}(z)  \rightarrow 0$ as $z \rightarrow \partial \mathcal{U}$. 
\end{Cor}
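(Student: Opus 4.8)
The plan is to prove the chain of equivalences $(a)\Leftrightarrow(b)\Leftrightarrow(d)$ by invoking the essential norm estimate of Theorem \ref{EssEst}, and then close the loop with $(c)$ using the machinery of vanishing Carleson measures from \cite{YamajiComp}. The key observation is that Theorem \ref{EssEst} already does almost all the work: it asserts
\begin{align*}
\N{T_{\mu}}_e \sim \limsup_{z \rightarrow \partial \U} \widetilde{\mu}(z) \sim \limsup_{z \rightarrow \partial \U} \widehat{\mu}(z),
\end{align*}
and since $T_{\mu}$ is compact if and only if $\N{T_{\mu}}_e = 0$ (as noted in the introduction), the equivalences $(a)\Leftrightarrow(b)\Leftrightarrow(d)$ follow immediately. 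Indeed, $\N{T_{\mu}}_e = 0$ is equivalent to $\limsup_{z \to \partial \U} \widetilde{\mu}(z) = 0$, which since $\widetilde{\mu} \geq 0$ is exactly the statement that $\widetilde{\mu}(z) \to 0$ as $z \to \partial \U$; the same reasoning applies verbatim to $\widehat{\mu}$.

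\textbf{Carrying out $(a)\Leftrightarrow(b)\Leftrightarrow(d)$.} First I would note that finiteness of $\mu$ guarantees $T_{\mu}$ is bounded, so that Theorem \ref{EssEst} applies: by Theorem \ref{MainBddness}, a finite positive measure always has bounded averaging function (one may also argue directly that the Berezin transform is bounded since $\widetilde{\mu}(z) \leq C \mu(\U) < \infty$ using the pointwise bound on the normalized kernel from Proposition \ref{NormalBK}). With boundedness secured, I would write down the two equivalences side by side: $(a)$ holds $\iff \N{T_{\mu}}_e = 0 \iff \limsup_{z \to \partial \U} \widetilde{\mu}(z) = 0 \iff (b)$, where the middle equivalence is Theorem \ref{EssEst} and the last uses nonnegativity of $\widetilde{\mu}$. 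The implication $(a)\Leftrightarrow(d)$ is identical with $\widehat{\mu}$ in place of $\widetilde{\mu}$.

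\textbf{Incorporating the Carleson condition $(c)$.} To bring $(c)$ into the chain, I would appeal to the characterization of vanishing Carleson measures. The natural route is to show $(d)\Leftrightarrow(c)$ using \cite[Theorem 3.3]{YamajiComp}, which establishes that the property of being a vanishing Carleson measure is independent of $p$ and (combined with the standard geometric comparison between the averaging function and the $L^p$-testing condition on Bergman metric balls) is equivalent to $\widehat{\mu}(z) \to 0$ as $z \to \partial \U$. Concretely, a bounded sequence in $L^p_a(\U, dV_{\alpha})$ converging to $0$ uniformly on compact sets is precisely the type of test sequence controlled by the boundary behavior of $\widehat{\mu}$, and the estimate \eqref{Rnf} derived in Section \ref{sect420} is exactly of this flavor. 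Thus $(c)$ slots in as equivalent to $(d)$, completing the cycle $(a)\Leftrightarrow(b)\Leftrightarrow(c)\Leftrightarrow(d)$.

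\textbf{Anticipated obstacle.} The genuinely new content has already been proven in Theorem \ref{EssEst}, so the only real care needed is in the equivalence $(c)\Leftrightarrow(d)$: one must ensure that the vanishing Carleson condition, stated via arbitrary bounded sequences tending to zero locally uniformly, truly matches the boundary decay of the averaging function, and that the $p$-independence is legitimately imported from \cite[Theorem 3.3]{YamajiComp}. I expect this to be the main point requiring attention rather than a deep difficulty — the finiteness hypothesis on $\mu$ ensures $\widetilde{\mu}$ is continuous (as remarked in Section \ref{sect220}) and that all the quantities are well-defined, so the argument is essentially a clean transcription of the essential-norm estimate into the language of compactness and Carleson measures.
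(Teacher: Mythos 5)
There is one genuine gap: your claim that finiteness of $\mu$ by itself guarantees that $T_{\mu}$ is bounded (equivalently, that a finite positive measure ``always has bounded averaging function,'' or that $\widetilde{\mu}(z) \leq C \mu(\U)$). This is false, and the justification you offer misreads Proposition \ref{NormalBK}: part $(i)$ bounds $\vert K_{\mathcal{U}}^{(\alpha)}(z,w) \vert$ only for $z$ ranging in a fixed \emph{compact} subset of $\U$ (with $w \in \U$ arbitrary), not uniformly in both variables; indeed $\vert k_z^{(\alpha)}(z) \vert^2 = K_{\mathcal{U}}^{(\alpha)}(z,z) \rightarrow \infty$ as $z \rightarrow \partial \U$, so no uniform pointwise bound on the normalized kernel exists. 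Concretely, take $\mu = \sum_n c_n \delta_{z_n}$ with $z_n \rightarrow \partial \U$, $\sum_n c_n < \infty$, and $c_n K_{\mathcal{U}}^{(\alpha)}(z_n,z_n) \rightarrow \infty$; then $\widetilde{\mu}(z_n) \geq c_n \vert k_{z_n}^{(\alpha)}(z_n) \vert^2 \rightarrow \infty$, so by Theorem \ref{MainBddness} this finite measure yields an \emph{unbounded} $T_{\mu}$. Hence you cannot ``secure boundedness'' up front from finiteness and then run both directions of $(a) \Leftrightarrow (b)$ through Theorem \ref{EssEst} as stated.

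The gap is repairable, and the repair is exactly where the finiteness hypothesis actually enters in the paper's proof. For $(a) \Rightarrow (b)$ no extra input is needed: a compact operator is bounded, so Theorem \ref{EssEst} applies and $\N{T_{\mu}}_e = 0$ forces $\limsup_{z \rightarrow \partial \U} \widetilde{\mu}(z) = 0$, which by nonnegativity is $(b)$. For $(b) \Rightarrow (a)$, one first uses finiteness of $\mu$ to conclude, as remarked in Section \ref{sect220}, that $\widetilde{\mu}$ is \emph{continuous}; together with $\widetilde{\mu}(z) \rightarrow 0$ at $\partial \U$ this makes $\widetilde{\mu}$ bounded on $\U$, whence $T_{\mu}$ is bounded by Theorem \ref{MainBddness}, and only then does Theorem \ref{EssEst} give $\N{T_{\mu}}_e = 0$, i.e. compactness. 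Apart from this point your route coincides with the paper's: the equivalences $(b) \Leftrightarrow (c) \Leftrightarrow (d)$ are imported wholesale from \cite[Theorem 3.3]{YamajiComp}, so your concern that $(c) \Leftrightarrow (d)$ is the delicate step is misplaced (and the appeal to the estimate (\ref{Rnf}) is irrelevant here); the real point requiring attention is precisely the boundedness of $T_{\mu}$ needed before Theorem \ref{EssEst} can be invoked.
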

\begin{proof}
By \cite[Theorem 3.3]{YamajiComp}, we have $(b) \Longleftrightarrow (c) \Longleftrightarrow (d)$. 
Therefore, it is enough to prove  $(a) \Longleftrightarrow (b)$. 
Assume $(a)$ holds. Then, $\N{T_{\mu}}_e=0$. 
Since a compact operator is bounded, we obtain $(b)$ by Theorem \ref{EssEst}. 
Next, assume $(b)$ holds. Since $\widetilde{\mu}$ is continuous, $\widetilde{\mu}$ is a bounded function. 
Therefore, $T_{\mu}$ is a bounded operator on $L^2_a(\mathcal{U},dV_{\alpha})$ by Theorem \ref{Bddness}. 
By using Theorem \ref{EssEst}, we obtain $(a)$. 
\end{proof}


\subsection*{Acknowledgment}
The author would like to thank to Professor H.~Ishi for useful advices 
and helpful discussions.

\end{document}